\documentclass[12pt]{article}

\usepackage{geometry}
\usepackage[english]{babel}

\usepackage{amsmath,amssymb}
\usepackage{graphicx}
\usepackage{authblk}
\usepackage{multicol}
\usepackage[colorlinks=true,linkcolor=blue,citecolor=blue,urlcolor=blue]{hyperref}
\usepackage[numbers]{natbib}

\usepackage{amsmath,bm}
\usepackage{amssymb,amsfonts}
\usepackage{amsthm}
\usepackage{mathrsfs}
\usepackage{graphicx}
\usepackage{textcomp}
\usepackage{comment}
\usepackage{algorithm}
\usepackage{array}
\usepackage{subcaption}
\usepackage{pgffor}

\usepackage{siunitx}
\usepackage{subcaption}
\usepackage{mathtools}
\usepackage{algorithm}
\usepackage{algorithmic}
\usepackage{booktabs}
\usepackage{todonotes}
\usepackage{enumerate}

\DeclarePairedDelimiter{\flexpar}{(}{)}

\DeclarePairedDelimiter{\norm}{\lVert}{\rVert}
\newcommand{\Hvec}{\bm{H}}
\newcommand{\Hxt}{\Hvec (\bm{x},t)}
\newcommand{\Mvec}{\bm{M}}
\newcommand{\Madvec}{\bm{M}_{\mathrm{ad}}}
\newcommand{\Mxt}{\bm{M}(\bm{x},t)}
\newcommand{\Mxtad}{\bm{M}_{\mathrm{ad}}(\bm{x},t)}
\newcommand{\Hxtn}{\frac{\Hxt}{\lVert \Hxt\rVert}}
\newcommand{\Hhsat}{\frac{\lVert \Hxt \rVert}{\hsat}}
\newcommand{\xvec}{\bm{x}}
\newcommand{\hsat}{H_{\mathrm{sat}}}
\newcommand{\msat}{M_{\mathrm{sat}}}

\newcommand{\yvec}{\bm{y}}
\newcommand{\rvec}{\bm{r}}
\newcommand{\rvect}{\bm{r}(t)}
\newcommand{\rveck}{\bm{r}_k}
\newcommand{\vvec}{\bm{v}}
\newcommand{\vvect}{\bm{v}(t)}
\newcommand{\vveck}{\bm{v}_k}
\newcommand{\svec}{\bm{s}}
\newcommand{\sadvec}{\bm{s}_{\mathrm{ad}}}
\newcommand{\svect}{\bm{s}(t)}
\newcommand{\sveck}{\bm{s}_k}
\newcommand{\sadveck}{\bm{s}_{\mathrm{ad},k}}
\newcommand{\sadvecn}{\bm{s}_{\mathrm{ad},n}}

\newcommand{\avec}{\bm{a}}

\newcommand{\analogft}{\bm{a}_\mathrm{AF}(t)}
\newcommand{\Langevin}{\mathcal{L}}
\newcommand{\diag}{\mathrm{diag}}
\newcommand{\Deltat}{\Delta t}
\newcommand{\hpf}{f_{\mathrm{HP}}}
\newcommand{\regularizer}{\mathcal{R}}
\newcommand{\interpolator}{\mathcal{I}}
\newcommand{\e}{\mathrm{e}}
\newcommand{\R}{\mathbb{R}}
\newcommand{\N}{\mathbb{N}}
\DeclareMathOperator{\vect}{vec}

\usepackage{pifont}
\newcommand{\cmark}{\checkmark}
\newcommand{\xmark}{\times}

\newtheorem{theorem}{Theorem}[section]

\newtheorem{lemma}[theorem]{Lemma}

\title{Debye Relaxation in Model-Based Multi-Dimensional Magnetic Particle Imaging}

\author[1,2]{Vladyslav Gapyak \thanks{Emails: vladyslav.gapyak@proton.me, thomas.maerz@h-da.de, andreas.weinmann@thws.de}}
\author[1,2]{Thomas März}
\author[3]{Andreas Weinmann}
\affil[1]{\footnotesize Hochschule Darmstadt, Schöfferstraße 3, 64295 Darmstadt, Germany}
\affil[2]{Data Science Institute, European University of Technology, European Union}
\affil[3]{Algorithms for Computer Vision, Imaging and Data Analysis Lab, Technische Hochschule Würzburg-Schweinfurt, Ignaz-Schön-Straße 11, 97421 Schweinfurt, Germany}

\date{}

\begin{document}
	\maketitle
	
\begin{abstract}
Model-based reconstruction approaches for the medical imaging modality Magnetic Particle Imaging (MPI) are typically based on the Langevin model, which assumes instantaneous alignment of the particles magnetic momenta with the applied field. Regarding the application to real data, Langevin model-based reconstruction methods require model transfer functions (MTF) obtained from calibrations to preprocess the data. There are also model-based reconstruction approaches that include relaxation effects and other particle-level dynamics. However, they are limited either to 1D or 1D-like scanning scenarios when considering real data, or are limited to simulated data in the case of multi-dimensional field-free point  (FFP) MPI. Thus, fully model-based reconstructions from multi-dimensional FFP scanning data that incorporate relaxation effects without using an MTF have not yet been demonstrated. In this work, we incorporate relaxation effects by considering a multi-dimensional Debye model and provide reconstruction formulae. In particular, we show that the Debye model-based signal is the response of a linear time-invariant system with exponential memory applied to a Langevin model-based signal. We provide a reconstruction algorithm for the introduced multi-dimensional Debye model. To this end, we devise a relaxation adaption step. For the resulting relaxation-adapted Debye signal, we show that it can be expressed by the well-studied MPI core operator derived from the Langevin theory. This results in a three-stage algorithm with low additional cost over the Langevin model, 
as the relaxation adaption scales linearly in the input data. We provide numerical results for the proposed algorithmic approach. In particular, we obtain fully model-based reconstructions from real 2D MPI data without involving any specific MTF analogous to the Langevin model case.
\end{abstract}
	\vspace{1em}
	
	\noindent{\it Keywords}: magnetic particle imaging, model-based reconstruction, relaxation, Debye model

\section{Introduction}

Magnetic Particle Imaging (MPI) is a tracer-based imaging modality invented by Gleich and Weizenecker 
and first published in~\cite{gleich2005original}. The tracers used in MPI are ferrofluids containing 
superparamagnetic iron-oxide nanoparticles (SPIONs), which have the property to respond to time-varying 
magnetic fields. Their magnetization response induces a voltage in the receive coils of the MPI scanner; 
upon digitalization, the induced voltage constitutes the signal, or measurement, obtained during an MPI scan. 
The objective of this imaging modality is to localize the particles, i.e., to reconstruct their spatial distribution 
from the obtained signal. In order to localize the nanoparticles, the magnetic fields applied usually follow a 
precise structure: they are the superposition of a \textit{selection field} and of a 
\textit{drive field}~\cite{BuzugKnopp2012}. The selection field is a static magnetic field with constant gradient, 
which magnetically saturates the concentration of nanoparticles, and vanishes in a field-free region. 
A dynamic field, the drive field, is superimposed onto the static field and steers the field-free region within 
the field-of-view (FOV) of the scanner. Because the SPIONs are magnetically saturated everywhere but close to the 
field-free region and their response is highly non-linear close to this region, the field-free region acts as a 
sensitive spot and allows the localization of the particles constituting the target concentration. 
This principle has proven itself successful and in 2009, Weizenecker et al.~\cite{Weizenecker_etal2009} 
showed a video reconstruction (3D+time) of the particle distribution within the beating heart of a mouse, 
setting a milestone in MPI imaging and showing its applicability to real-time in vivo imaging. 
These achievements are possible thanks to MPI's high sensitivity and fast acquisition time~\cite{BuzugKnopp2012}. 
Additionally, MPI has the advantage that it does not employ ionizing radiation, in contrast to other widespread 
medical imaging modalities~\cite{irfan2022}.
Furthermore, researchers have proposed a range of different medical applications of MPI; among these we mention 
the detection of as few as 250 cancer cells~\cite{Song2018}, the tracing of stem 
cells~\cite{connell2015advancedcellTherapies,GoodwillConolly2011,JUNG2018139,Lemaster2018,Tomitaka2015Lactoferrin} 
as well as blood-flow~\cite{Franke2020BloodFlow} and cardiovascular imaging~\cite{Bakenecker2018MPIvascular,Tong2021,Vaalma2017}. 
Further MPI applications can be found in~\cite{billingsMPIapplications,Yang2022}.

Concerning the reconstruction algorithms, they can be divided into two main classes: 
measurement-based or model-based approaches. Measurement-based approaches usually involve the calibration of a 
system-matrix: the FOV is subdivided into a grid and a known concentration of particles -- the $\delta$-concentration -- 
is iteratively put, scanned, and moved across all cells of the chosen grid; the scans of the $\delta$-concentration 
are stored as columns of a system matrix $S$, which consequently contains the responses of the scanning system 
to $\delta$-impulses; given the scan data $s$ collected by scanning a target concentration, the regularized inversion 
of the linear system $Sx=s$ yields the discrete approximation $x$ of the target concentration, represented as 
a piece-wise constant approximation which is constant on each cell of the chosen grid of the FOV. 
The advantage of measurement-based methods is that the physical phenomena and inherent complexities of the 
imaging setup are implicitly taken into account by the calibration. The disadvantage is that such calibrations 
are time-consuming. For this reason, effort has been put into developments of methods that can reduce the 
calibration time. Such methods include the super-resolution of system matrices acquired on low-resolution 
grids~\cite{gungor2021superresolution,Schrank2022superresolution,Yin2023dipSM}, extrapolation of system matrices 
outside the FOV~\cite{schefflerboundary2022,scheffler2023extrapolation}, and techniques involving compressed 
sensing~\cite{KnoppWeber2013,Weber2015_symm_compressed}.
Complementary to calibration acceleration methods, model-based approaches aim at performing reconstructions 
without the need of a system matrix; this is achieved by modeling the forward imaging operator, starting from 
the mathematical equations governing the physical phenomena underlying MPI. In addition, the modeling of the 
scanning geometry becomes important: the filed-free region that acts as a sensitive spot can either be a point 
-- the field-free point (FFP) geometry -- or a whole line -- the field-free line (FFL) acquisition 
geometry~\cite{ErbeFFLbook}. We mention that due to the constraints provided by Maxwell's equations, higher 
dimension geometries such as a hypothetical field-free plane are not possible~\cite{gapyak2023ffl3d}. 
In this work we will consider the FFP scanning geometry. Concerning the modeling of magnetic phenomena in MPI, 
the simplest models use the Langevin theory of paramagnetism~\cite{jiles1998introduction}. 
The Langevin model has been widely studied in the context of model-based MPI reconstructions and it is based 
on adiabatic assumptions and instantaneous alignment of the particles to the magnetic field. Real nanoparticles 
however show more complex behavior and in particular, they exhibit delays in the realignment of the magnetic 
momenta, a phenomenon called \textit{relaxation}. In the following section we offer an overview of model-based 
reconstruction approaches (with or without relaxation) to contextualize our contributions within the MPI literature. 

\subsection{Related work}

\paragraph{Time-domain model-based approaches (X-space family).}
The earliest example of model-based reconstruction in time-domain is the X-space formulation by Goodwill 
and Conolly~\cite{GoodwillConolly2010}, and it uses the Langevin model. The original 1D formulation (2010) 
was extended in 2011 to multidimensional settings~\cite{GoodwillConolly2011}, which however rely on Cartesian 
trajectories in the 2D/3D case, i.e., trajectories that are flat and approximate the 1D case. In 2019 extensions 
of the X-space framework to non-Cartesian trajectories was proposed by Ozaslan et al.~\cite{ozaslan2019regridding}, 
using a regridding strategy. The experiments were performed in simulated scenarios only. Reconstruction formulae 
for the $n$-dimensional case were introduced by März and Weinmann in 2016~\cite{marz2016model}. In~\cite{marz2016model} 
the concentration-to-signal model is formulated in terms of the MPI core operator, which mediates the concentration, 
the positions, velocities and the measured signal in time domain; additionally, explicit reconstruction formulae are 
derived and it is realized as a two-stage algorithm (core stage and deconvolution stage). 
These reconstruction formulae were tested in a variety of simulated scenarios, such as in 
multi-patch MPI~\cite{gapyak2023multipatch} and extended to 3D FFL scenarios~\cite{gapyak2023ffl3d}. 
The recent MoBiT-2S algorithm~\cite{gapyak2025trajectoryindep} builds on the two-stage algorithm and 
demonstrates FFP model-based reconstruction on real 2D MPI data with both data obtained with Lissajous 
scans (``MPIData: EquilibriumModelwithAnisotryopy"~\cite{knopp2024equilibriumdata} Bruker dataset) 
as well as on non-Lissajous type data~\cite{leili2025transverseMNP}. In the experiments with MoBiT-2S 
on Bruker data, a computed transfer function was used to divide the signal in Fourier-domain, similarly 
to what is normally done in other model-based publications with Lissajous data 
(e.g., with the Chebyshev polynomial~\cite{droigk2022multidimcheb,droigk2025efficientcheb}). 
In this sense, the reconstruction pipelines are hybrid and perform model-based reconstructions on data 
that have been cleaned using information contained in a calibrated system matrix.
Recently, Sanders et al.~\cite{sanders2025physicsbased} published a reformulation of the forward model 
as the product of sparse linear operators. Reconstruction is performed by regularized inversion of the resulting 
linear operator. High quality results are shown for 2D FFL scans obtained with the Momentum scanner (Magnetic Insight), 
which reconstruct a 2D projection image (orthogonally to the FFL) with a FFL moving in a Cartesian-like pattern, 
thus leveraging 1D-like scanning trajectories to employ the X-space-based algorithms. In~\cite{sanders2025physicsbased} 
it was also proposed to use the method for 2D Lissajous scans (in a simulated scenario).

\paragraph{Frequency-domain model-based approaches (Chebyshev family) and system-function family.}
We discuss model-based reconstruction algorithms that consider the Langevin model and the signal in Fourier-domain. 
The first is due to~\cite{Rahemeretal2009}, where it has been shown that the 1D MPI signal can be represented in 
terms of Chebyshev polynomials of the second kind; this representation has been leveraged to devise reconstruction 
schemes in the Chebyshev basis. A rigorous analysis of the 1D Langevin model has been given by Erb 
et al.~\cite{erb2018mathematical}, characterizing also the decay of the singular values of the operator and 
its ill-posedness. For the multi-dimensional FFP case, Maa{\ss} and Mertins~\cite{maass2019fourier} showed 
that the Fourier coefficients of the MPI signal can be represented as a series in terms of Chebyshev polynomials 
of the second kind; however, these results are tied to the the specific case of Lissajous scanning trajectories 
(sinusoidal along each coordinate). Subsequently, Droigk et al.~\cite{droigk2022multidimcheb} used the direct 
Chebyshev reconstruction method (DCR) stemming from the formulae in~\cite{maass2019fourier} to produce reconstruction 
results on 2D FFP data acquired with Bruker's scanner, which indeed employs Lissajous scanning curves. 
However, these results are not only tied to the Lissajous scanning trajectory, they also employ a transfer 
function to preprocess the data: following Knopp et al.~\cite{KnoppBiederer_etal2010}, the Fourier-spectrum of 
the signal is divided by a transfer function, estimated by fitting simulated calibration data with the Langevin 
model and a measured system matrix in a least-squares fashion. In this sense, the results have been shown in a 
hybrid scenario: although no calibrated system matrix is used in the reconstruction algorithms, it is employed in 
the preprocessing of the data to render the input data fit for the method. Improvements on the model side where 
achieved with the introduction of the equilibrium model with anisotropy (EQANIS)~\cite{maass2024equilibriumanysotropy}, 
which extends the Langevin model to include anisotropy of the nanoparticles and has been used to simulate system 
matrices for system inversion~\cite{maass2024equilibriumanysotropy} as well as with the Chebyshev representation 
in Lissajous trajectories~\cite{droigk2025efficientcheb}. However, these results as well were obtained by 
preprocessing the data with a computed transfer function obtained from a measured system-matrix. 

\paragraph{X-space Debye relaxation models.} 
Relaxation effects in X-space frameworks in MPI were first incorporated in essentially one-dimensional settings. 
In particular, in 2012 Croft et al.~\cite{Croft2012relaxation} amended the 1D X-space theory to account for 
non-adiabatic SPIO magnetization effects. In particular, they adopted the first-order Debye model following earlier 
literature on ferrofluids such as~\cite{shliomis1974}. The first-order Debye process models relaxation by incorporating 
a \textit{temporal} convolution between the adiabatic Langevin magnetization and an exponentially decaying Debye kernel.
The subsequent analysis and experiments have been carried out in essentially 1D scenarios: on the Berkley 
relaxometer (producing a virtual FFP along a line) and on the Berkley projection X-space scanner, which uses an 
FFL moving along a single scan direction. They show that the predicted asymmetric and direction-dependent 
blurring of the 1D point-spread function (PSF) agrees with the measured data, and deconvolution is discussed 
as a possible post-processing step. In 2015, Bente et al.~\cite{bente2015rotFFL} extended the idea and incorporated 
the deconvolution with the Debye kernel to projection data acquired with an electronically rotated FFL scanner. 
In particular, in~\cite{bente2015rotFFL} the signal is corrected using the 1D Debye kernel along each of the 
multiple FFL scanning directions before reconstruction. However, although these results underpin the benefits 
of incorporating relaxation effects, the proposed methods were still confined to 1D-like FFL or 1D FFP setups.
Subsequent work by Utkur et al.~\cite{utkur2017viscosity} and Muslu et al.~\cite{utkur2018multicolor} on 
relaxation-based viscosity mapping and multicolor MPI also operate in 1D-like scenarios 
(magnetic particle spectroscopy or stacked 1D scans).

\paragraph{Particle-level Debye-type and hysteresis models.} 
The behavior of real nanoparticles is complex and a range of forward models have been 
formulated~\cite{kluth2018mathematical}. Such models range from the Langevin formulation (equilibrium model) 
to formulations that incorporate the rotation of the whole particle (Brownian rotation~\cite{coffey1992brown}), 
or the internal rotation (N\'eel relaxation~\cite{neel1953relaxation}), as well as models that employ the more 
general Fokker-Plank equation~\cite{risken1989fokker}, which are however computationally intensive. 
In 2023 Solibida et al.~\cite{solibida2023debye} introduce the refined Debye model (RDM) in which the relaxation 
time is dependent on the magnetic field applied (and hence on time). This field-dependent relaxation is fitted 
via Brown and N\'eel stochastic simulations and compared with the classical Debye model, showing that the 
RDM is able to describe the behavior of the harmonics for different viscosities, wheras the standard Debye 
model fails. However, the benefits of RDM have been shown on a 1D multifrequency MPI setup and no image 
reconstruction on multi-dimensional scanners was performed. In 2023 Li et al. contributed in two additional 
ways to Debye-like models in MPI; first, they introduce a multi-dimensional Debye model~\cite{li2023multidebye}, 
where the relaxation kernel is the superposition of several first-order Debye terms, weighted by the relative 
amplitudes of the magnetic fields's components. Comparison between simulated and measured magnetization curves 
show the improvement of the multi-dimensional Debye model over the standard Debye model; although it incorporates 
the influence of different components of the magnetic fields, the signal is collected only along the $x$ axis, 
and the multi-dimensional drive-field amplitudes along the $x$ and $y$-axis lay in a 10,000:20 or 10:1 ratio, 
which results in flattened trajectories that approximate 1D scans. Moreover, no images of real phantoms are 
presented. The second contribution of Li et al. involves the introduction of the modified Jiles-Atherton 
(MJA) model~\cite{li2023jiles-arthenton}, whose magnetization curves (and its derivative) show strong 
agreement with the measured magnetization; the reconstructions with the specialized MJA X-space algorithm 
have been performed in a 1D scenario and the applicability to multi-dimensional MPI with 2D (or 3D) scanning 
sequences remains open.

\subsection{Contribution}
In summary, existing model-based approaches can be grouped into three classes:
\begin{enumerate}[i)]
	\item approaches that include relaxation and other particle-level dynamics, but do so with real-data in 1D or 1D-like scanning scenarios.
	\item Langevin-based models that operate in multi-dimensional FFP MPI, but do so in simulated scenarios.
	\item Langevin-based and beyond-Langevin models that both operate on multi-dimensional FFP MPI and show results on real data, but do so in a hybrid fashion, using a transfer function obtained from a calibration for preprocessing.
\end{enumerate}
Consequently, to the best of our knowledge, no method so far was able to provide 
fully model-based reconstructions (neither model transfer function (MTF) nor hybrid corrections)
of fully multi-dimensional FFP data while also being able to incorporate relaxation effects of the magnetic particles. 
This work contributes precisely to this vacuum in the MPI literature 
by considering a Debye model which gives rise to a relaxation adaption technique.
A preliminary version of this technique was shortly communicated in the conference proceeding~\cite{gapyak2026iwmpi}, 
but without details on the method or its analysis, which we provide now.
More specifically, the contributions of this work are:
\begin{enumerate}[1.]
	\item 
	Starting from a multi-dimensional variant of the Debye model, 
	we provide reconstruction formulae and show that the Debye model-based signal
	is the response of a linear time-invariant (LTI) system with exponential memory  
	applied to a Langevin model-based signal (theorem~\ref{theo:LTI}).
	\item 
	We derive, upon discretization in time, from the discrete-time LTI representation a first-order linear recurrence 
	of the signal data (equation \eqref{eq:relaxation:correction}), which we leverage to devise a \textit{relaxation adaption} step.
	The relaxation-adapted Debye signal data corresponds to Langevin model-based signal data
	which is consequently represented by the MPI core operator derived in earlier work from the Langevin theory. 
	Thus, we show that first order Debye relaxation models can be incorporated within the Langevin framework 
	without redesign of the reconstruction method, but only at the price of the adaption correction step, 
	which is of order $\mathcal{O}(L)$ where $L$ is the amount of time-samples collected during a scan.
	\item 
	Using our method which combines the relaxation-adaption with MoBiT-2S we obtain
	fully 2D, fully model-based (no model transfer function) reconstructions of good quality
	from real 2D MPI data.
\end{enumerate}

\section{Methods}

We begin by introducing the preliminaries of Langevin model-based MPI in section~\ref{sec:langevin}. 
We then introduce the multi-dimensional Debye model in section~\ref{sec:debye} and derive 
the representation of the scan signal $\svect$
as the response of an LTI system with exponential memory applied to a Langevin model-based input signal.
From this representation we derive the relaxation adaption step. 
Finally, in section~\ref{sec:reco:algorithm} we describe the stages 
of the proposed reconstruction algorithm.

\subsection{The Langevin model}\label{sec:langevin}

In MPI a liquid (tracer) containing magnetic nanoparticles is injected into the target specimen. 
These nanoparticles spread within the specimen (e.g., through the blood-vessel system of a small animal) 
and we are interested to image their spatial distribution.
The particle concentration is 
a function $\rho\colon \R^3 \to \R^{\geq 0}$ with compact support 
which represents the concentration of nanoparticles at each point in space.
These nanoparticles react to changes in a magnetic field and consequently, 
to image their distribution, a chosen time-depending magnetic field $\Hxt \in \R^3$ is applied. 
From now on, $\xvec \in \R^3$ indicates the position in space and $t \in \R$ is the time variable. 
As a result of the scan, one obtains the (background corrected) signal 
that can be modeled as~\cite{kluth2018mathematical}
\begin{equation}\label{eq:general:signal:s}
	\svect = \left [-\mu_0\frac{d}{dt} \int_{\mathbb{R}^3}R(\xvec )\Mxt\, d\xvec\right ] * \analogft ,
\end{equation}
where $\Mxt\in\mathbb{R}^3$ is the magnetization response of the particle distribution $\rho$, $R(\xvec )\in\mathbb{R}^{3\times 3}$ is the sensitivity profile of the receive coils, $\analogft$ is a vector kernel representing the analog filtering of the signal, and $\mu_0$ is a physical constant (magnetic permeability in vacuum).
Usually, the function $\analogft$ is known and can be factored out by a division in Fourier domain, so that the signal is represented more simply as
\begin{equation}\label{eq:signal:s}
	\svect = -\mu_0\frac{d}{dt} \int_{\mathbb{R}^3}R(\xvec )\Mxt\, d\xvec  .
\end{equation}
To proceed further, it is important to choose the model for the magnetization response $\Mxt$. 
According to the adiabatic magnetization Langevin model, the magnetization response $\Mxtad$ can be modeled as
\begin{equation}\label{eq:mag:langevin}
	\Mxtad = m\rho (\xvec )\Langevin\flexpar*{\Hhsat}\Hxtn
\end{equation}
where $m$ is the magnetic moment of a single particle, and
\begin{equation}\label{eq:hsat}
	\hsat = \frac{k_B T}{ \msat \frac{\pi}{6}d^3}
\end{equation}
is a constant encapsulating physical properties of the particles such as their temperature $T$, 
their magnetic saturation $\msat$, their diameter $d$ and the Boltzmann's constant $k_B$.
With the choice of model as in \eqref{eq:mag:langevin} we denote the corresponding signal following 
equation \eqref{eq:signal:s} by $\sadvec(t)$. We note that, although the real setup is always 3D,
with particle concentrations of the form $\rho(x,y,z) = \tilde{\rho}(x,y) \cdot \delta(z)$,
where $\delta(z)$ means the Dirac-$\delta$, and a scanning trajectory in the $xy$-plane
we have a 2D setup, cf.~\cite{marz2016model}. 
Thus the dimensions $n$ considered are $n=2$ or $n=3$.
The representation of $\sadvec(t)$ has been shown to be mediated by the 
so called MPI Core Operator~\cite{marz2016model} which we now recall.
First, let us consider the (matrix-valued) MPI Kernel 
\begin{align}\label{eq:kernel:matrix}
	K(\yvec ) &= \Langevin '(\norm{\yvec}_{2})\frac{\yvec\yvec^T}{\norm{\yvec}_{2}^2} + \frac{\Langevin (\norm{\yvec}_{2})}{\norm{\yvec}_{2}}\left [ \mathbb{I}- \frac{\yvec\yvec^T}{\norm{\yvec}_{2}^2} \right ] &
	& \text{for }\yvec\in\mathbb{R}^n\setminus \lbrace 0\rbrace ,
\end{align}
extended by $K(0)=\frac{1}{3}\mathbb{I}$ by continuity. The MPI Kernel in \eqref{eq:kernel:matrix} 
is the derivative $\frac{d}{d\yvec}[\Langevin (\norm{\yvec})\frac{\yvec}{\norm{\yvec}}]$ 
of the function $\Langevin (\norm{\yvec})\frac{\yvec}{\norm{\yvec}}$ in \eqref{eq:mag:langevin}. 
We denote with the symbol $K_h$ its resolution-rescaled (dilated) version $K_h (y) = \frac{1}{h}K(\frac{y}{h})$ 
for a resolution parameter $h$. Following the analysis in \cite{gapyak2023ffl3d}, the MPI kernel 
is a smooth and bounded kernel in $C^{\infty}_b (\R^n ;\R^{n\times n})$, 
where $C^{\infty}_b$ denotes smooth and bounded functions. 
If now $\rho$ is a particular particle concentration, we define the MPI Core Operator $A_h$ 
as the convolution operator on the target concentration using the MPI Kernel
\begin{align}\label{eq:coreoperator}
	A_h[\rho](x) &= (K_h * \rho ) (x) = \int_{\R^n} \rho (y) K_h (x-y)\, dy \, .
\end{align}
It has been shown in \cite{gapyak2023ffl3d} that $A_h$ in~\eqref{eq:coreoperator} 
is well defined for any $\rho\in L^1 (\R^n)$, where $L^1$ denotes the space of integrable functions,
and that $A_h\colon L^1 (\R^n) \mapsto C_b^{\infty} (\R^n,\R^{n\times n})$ is a bounded linear operator.
Finally, in FFP MPI the magnetic field applied $\Hxt$ is the superposition of a static \emph{selection field} 
$\Hvec_s(\xvec)$ and a dynamic \emph{drive field} $\Hvec_d(t)$. 
The selection field is usually the simple field $\Hvec_s (\xvec ) = G\xvec$ 
for some constant gradient $G\in\R^{n\times n}$ and the drive field can be represented as 
$\Hvec_d (t) = -G\rvect$, for a given scanning trajectory $\rvect$. 
Consequently, the magnetic field applied during a scan in FFP MPI is
\begin{equation}\label{eq:H}
	\Hxt = \Hvec_s (\xvec) + \Hvec_d (t) = G (\xvec - \rvect ).
\end{equation}
With the standard choice of $R(\xvec )$ (typically constant, $R(\xvec)\equiv R\in\R^{n\times n}$), 
the signal $\sadvec(t)$ corresponding to the Langevin model $\Mxtad$ is represented as
\begin{equation}\label{eq:reco:formula:langevin}
	\sadvec(t) = -\mu_0 \; m \; R \; A_{\hsat}[\rho] \flexpar*{G\rvect} \; G\vvect .
\end{equation}

\subsection{The Debye model and the relaxation adaption}\label{sec:debye}

With the Langevin model $\Mxtad$ in \eqref{eq:mag:langevin}, the magnetization of the particles $\Mxtad$ is a vector field parallel to 
$\Hxt$ for each point $t$ in time. Consequently, instantaneous re-alignment of the nanoparticles' momenta with the varying magnetic field $\Hxt$
is assumed. To model relaxation effects, we consider the Debye model~\cite{Croft2012relaxation} in a multi-dimensional scenario.
We propose the multi-dimensional Debye model defined via the ordinary differential equation
\begin{align}\label{eq:debye:ode}
	\frac{d\Mxt}{dt} &= -\frac{\Mxt - \Mxtad}{\tau}, &
	&\text{for} \qquad t > 0,
\end{align}
where $\Mxtad$ is the magnetization according to the Langevin model from \eqref{eq:mag:langevin}.
Before proceeding further we fix the following hypotheses on the objects under consideration:
\begin{itemize}
	\item[(P1)] the function $\rho$ is an integrable function, i.e., $\rho \in L^1(\R^n)$;
	\item[(P2)] the scanning trajectory is differentiable, $\rvec \in C^1([0;\infty); \R^n)$;
\end{itemize}
\begin{lemma}\label{lem:debye}
	With the assumptions (P1) and (P2) the Cauchy problem 
	\begin{align}\label{eq:debye:ivp}
		\frac{d\Mxt}{dt}  &= -\frac{\Mxt - \Mxtad}{\tau}, \quad t > 0, & \Mvec(\xvec,0) = \Mvec_0(\xvec),
	\end{align}
	for any initial state $\Mvec_0 \in L^1(\R^n; \R^n)$
	is well defined for almost every $\xvec \in \R^n$. 
	The magnetization $\Mxt$ is given by Duhamel's formula
	\begin{align}\label{eq:duhamel:M}
		\Mxt &= \e^{-\frac{t}{\tau}} \Mvec_0(\xvec) + \int\limits_0^t \frac{\e^{-\frac{(t-z)}{\tau}}}{\tau} \; \Madvec(\xvec,z) \; dz
	\end{align}
	and $\Mxt$ is integrable over $\R^n$ for every fixed $t \geq 0$.
\end{lemma}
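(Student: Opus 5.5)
For fixed $\xvec$, the Cauchy problem \eqref{eq:debye:ivp} is a linear, constant-coefficient ODE in $t$ with forcing term $t\mapsto \Madvec(\xvec,t)/\tau$. The plan is to prove three things in order: (i) for almost every $\xvec$ the forcing is continuous in $t$; (ii) then the classical theory for linear ODEs gives a unique $C^1$ solution, namely Duhamel's formula \eqref{eq:duhamel:M}; (iii) integrating \eqref{eq:duhamel:M} in $\xvec$ gives integrability for each fixed $t$.

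\textbf{Step 1: continuity of the forcing.} By (P1), $\rho\in L^1(\R^n)$, so $\rho(\xvec)$ is a finite number for almost every $\xvec$. Fix such an $\xvec$, together with an $\xvec$ where $\Mvec_0(\xvec)$ is finite. From \eqref{eq:mag:langevin} and \eqref{eq:H},
\begin{equation*}
	\Madvec(\xvec,t)=m\rho(\xvec)\,\Phi\flexpar*{\tfrac{G(\xvec-\rvect)}{\hsat}},
	\qquad \Phi(\yvec)=\Langevin(\norm{\yvec})\frac{\yvec}{\norm{\yvec}} .
\end{equation*}
The function $\Phi$ extends continuously by $\Phi(0)=0$, since $\Langevin(0)=0$. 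It is even smooth, because its derivative is the kernel $K$ of \eqref{eq:kernel:matrix}, which lies in $C_b^\infty$. Moreover $\norm{\Phi}\le 1$, because $|\Langevin|\le 1$. By (P2), $\rvec$ is continuous, so $t\mapsto\Madvec(\xvec,t)$ is continuous on $[0,\infty)$ and bounded by $m|\rho(\xvec)|$. This step is the only place that needs care: one must deal with the apparent singularity of $\Hxtn$ where the field vanishes, and the continuous extension of $\Phi$ at $0$ is what resolves it.

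\textbf{Step 2: existence, uniqueness and the formula.} Since the right-hand side of \eqref{eq:debye:ivp} is globally Lipschitz in $\Mvec$ and continuous in $t$, Picard--Lindelöf gives a unique global $C^1$ solution for each such $\xvec$. To identify it, multiply by the integrating factor $\e^{t/\tau}$:
\begin{equation*}
	\frac{d}{dt}\left(\e^{t/\tau}\Mxt\right)=\frac{\e^{t/\tau}}{\tau}\Madvec(\xvec,t).
\end{equation*}
Integrating from $0$ to $t$ yields \eqref{eq:duhamel:M}. Conversely, differentiating \eqref{eq:duhamel:M} with the Leibniz rule confirms that it solves the problem. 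Since the admissible $\xvec$ form a set of full measure, the problem is well defined almost everywhere.

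\textbf{Step 3: integrability.} For fixed $t\ge0$, the bound from Step 1 gives
\begin{equation*}
	\norm{\Mxt}\le \e^{-t/\tau}\norm{\Mvec_0(\xvec)}+m|\rho(\xvec)|\int_0^t\frac{\e^{-(t-z)/\tau}}{\tau}\,dz
	\le \norm{\Mvec_0(\xvec)}+m|\rho(\xvec)|\,(1-\e^{-t/\tau}).
\end{equation*}
The right-hand side is in $L^1(\R^n)$ because $\Mvec_0\in L^1$ and $\rho\in L^1$. For measurability of $\xvec\mapsto\Mxt$, note that the integrand $(\xvec,z)\mapsto \e^{-(t-z)/\tau}\Madvec(\xvec,z)/\tau$ is jointly measurable: it is $\rho(\xvec)$ times a continuous function of $(\xvec,z)$. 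Fubini--Tonelli then shows that the $z$-integral is measurable in $\xvec$, and with the dominating bound above this gives $\Mvec(\cdot,t)\in L^1(\R^n;\R^n)$.
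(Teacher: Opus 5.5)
Your proof is correct and follows essentially the same route as the paper: continuity and boundedness of $\Phi$ composed with the field give a well-defined forcing for almost every $\xvec$, the pointwise linear ODE is solved by variation of constants, and integrability is read off from Duhamel's formula. Your Step 3 is more careful than the paper's one-line conclusion, because the uniform bound $\norm{\Mxt}\le\norm{\Mvec_0(\xvec)}+m|\rho(\xvec)|$ together with the Fubini measurability argument is what actually justifies integrability of the time integral, and you also need only continuity rather than $C^1$ regularity of $\rvec$.
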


\begin{proof}
	Since $\rvec$ is $C^1$, the applied magnetic field given by equation \eqref{eq:H} 
	with $G$ constant is differentiable once w.r.t. time $t$ and infinitely many times w.r.t the spatial 
	variable $\xvec$. Consider the definition of $\Mxtad$ in \eqref{eq:mag:langevin}. 
	The function $g(\yvec) = \Langevin (\norm{\yvec})\frac{\yvec}{\norm{\yvec}}$ is bounded and smooth away from zero
	with continuous extensions of $g$ and its derivative into $\yvec = 0$. 
	Consequently, since $\Hxt$ is $C^1$, then $g(\Hvec)\in C^1$ as a function of time and so is $\Mxtad$. 
	Because $\Hxt$ is smooth in $\xvec$, $g(\Hvec)\in C^1$ as a function of $\xvec$ and bounded.
	Hence $\Mxtad$ is integrable for every $t \geq 0$ since $\rho \cdot g(\Hvec)$ is. 
	For a given initial state $\Mvec_0 \in L^1(\R^n ; \R^n)$ we obtain therefore
	for almost every fixed $\xvec \in \R^n$ an initial value problem for an ordinary differential equation.
	The solution is found by variation of the constants and yields Duhamel's formula.   
	Since $\Mvec_0(\xvec)$ and $\Mxtad$ (for fixed $t$) are integrable functions, 
	so is $\Mxt$ by equation \eqref{eq:duhamel:M}. 
\end{proof}

\begin{theorem}\label{theo:LTI}
	Under the assumptions (P1) and (P2), the signal $\svect$ of equation \eqref{eq:signal:s}
	when employing the magnetization from the Debye model in equation \eqref{eq:debye:ode} 
	solves the initial value problem
	\begin{align}\label{eq:signal:ivp}
		\frac{d}{dt} \svect & = -\frac{1}{\tau}\svect + \frac{1}{\tau} \sadvec(t), \qquad t > 0, &
		\svec(0) & = \svec_0,
	\end{align}
	where $\sadvec$ is the signal as given by the Langevin model and the initial state  
	$\svec_0$ is consistent with the initial state of $\Mxt$ from equation \eqref{eq:debye:ivp}
	and the initial state of $\Mxtad$.
	The signal $\svect$ is given by Duhamel's formula
	\begin{align}\label{eq:duhamel:s}
		\svect &= \e^{-\frac{t}{\tau}} \svec_0 + \int\limits_0^t \frac{\e^{-\frac{(t-z)}{\tau}}}{\tau} \; \sadvec(z) \; dz\, .
	\end{align}
\end{theorem}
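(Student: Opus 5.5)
The approach is to integrate the Debye ODE \eqref{eq:debye:ode} against the coil sensitivity $R$ over space. The key observation is that the map $\Mvec \mapsto -\mu_0 \frac{d}{dt}\int_{\R^n} R\,\Mvec(\xvec,t)\,d\xvec$ defining the signal in \eqref{eq:signal:s} is linear and commutes with the scalar relaxation dynamics, which do not depend on $\xvec$. I would introduce the aggregated magnetizations
\begin{align*}
\bm{m}(t) = \int_{\R^n} R\,\Mxt\,d\xvec, \qquad \bm{m}_{\mathrm{ad}}(t) = \int_{\R^n} R\,\Mxtad\,d\xvec .
\end{align*}
By Lemma~\ref{lem:debye}, both integrands are integrable for every fixed $t\ge 0$, so these quantities are well defined. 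Integrating Duhamel's formula \eqref{eq:duhamel:M} over $\xvec$ and exchanging the $\xvec$- and $z$-integrals by Fubini gives
\begin{align*}
\bm{m}(t)=\e^{-t/\tau}\bm{m}_0+\int_0^t \frac{\e^{-(t-z)/\tau}}{\tau}\,\bm{m}_{\mathrm{ad}}(z)\,dz .
\end{align*}
Fubini is justified because $\lvert\Madvec(\xvec,z)\rvert\le m\,\rho(\xvec)$, since $\lvert\Langevin\rvert\le 1$, and the exponential weight is bounded on $[0,t]$. The same argument shows $\frac{d}{dt}\bm{m}=-\frac{1}{\tau}(\bm{m}-\bm{m}_{\mathrm{ad}})$.

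Next I would pass to the signal. By definition $\svect=-\mu_0\,\bm{m}'(t)$ and $\sadvec(t)=-\mu_0\,\bm{m}_{\mathrm{ad}}'(t)$. The latter is given explicitly by \eqref{eq:reco:formula:langevin}: it is continuous because $A_{\hsat}[\rho]$ is in $C_b^\infty$ and $\rvec\in C^1$, and its derivation justifies differentiating $\bm{m}_{\mathrm{ad}}$ under the integral. Since $\bm{m}_{\mathrm{ad}}\in C^1$, the aggregated ODE shows that $\bm{m}'$ is a combination of $C^1$ functions, hence $\bm{m}\in C^2$. Differentiating the ODE once more gives
\begin{align*}
\bm{m}''=-\tfrac{1}{\tau}\bm{m}'+\tfrac{1}{\tau}\bm{m}_{\mathrm{ad}}' .
\end{align*}
Multiplying by $-\mu_0$ yields exactly \eqref{eq:signal:ivp}. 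The initial value is
\begin{align*}
\svec_0=-\mu_0\,\bm{m}'(0^+)=\tfrac{\mu_0}{\tau}\Bigl(\int R\,\Mvec_0\,d\xvec-\int R\,\Madvec(\xvec,0)\,d\xvec\Bigr),
\end{align*}
which makes precise the consistency with the initial states of $\Mvec$ and $\Madvec$. Duhamel's formula \eqref{eq:duhamel:s} then follows by variation of constants for this scalar-coefficient linear ODE. Equivalently, one can differentiate the integrated expression for $\bm{m}$ directly and integrate by parts in $z$.

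The main obstacle is the regularity and interchange step: differentiating $\int R\,\Mxt\,d\xvec$ in time under the integral, and making sense of $\frac{d}{dt}\sadvec$ or avoiding it. I would sidestep pointwise-in-$\xvec$ time derivatives by working only with the aggregated $\bm{m}$, whose ODE is obtained from the integrated Duhamel formula. The only differentiability of $\bm{m}_{\mathrm{ad}}$ needed is that already established in \eqref{eq:reco:formula:langevin}. If $\sadvec$ is merely continuous, I would state \eqref{eq:signal:ivp} in the integral (mild) sense, which is exactly \eqref{eq:duhamel:s}.
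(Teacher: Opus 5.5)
Your proposal is correct and follows the same route as the paper. Both arguments rest on the identity $\svect = \frac{\mu_0}{\tau}\bigl(\int R\,\Mxt\,d\xvec - \int R\,\Mxtad\,d\xvec\bigr)$ (the paper's \eqref{eq:intermediate:s}), differentiate it once more to obtain \eqref{eq:signal:ivp}, read off $\svec_0$ at $t=0$, and conclude by variation of constants. The only difference is how you justify that identity: you integrate Duhamel's formula \eqref{eq:duhamel:M} over $\xvec$ via Fubini and track the $C^1$/$C^2$ regularity of the aggregated magnetizations, whereas the paper simply interchanges $\frac{d}{dt}$ with the spatial integral and applies the ODE pointwise, so yours is the more carefully justified version.
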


\begin{proof}
	As a result of lemma~\ref{lem:debye}, $\Mxt$ is integrable over $\R^n$ for every fixed $t \geq 0$. 
	Therefore, the signal, which involves the integral of $\Mxt$ over $\xvec$, is well defined.
	By interchanging integration and differentiation w.r.t. time $t$ we obtain 
	\begin{align}
		\svect &= -\mu_0 \int_{\R^n} R(\xvec) \frac{d}{dt} \Mxt\, d\xvec
		= -\mu_0 \int_{\R^n} R(\xvec) \left(-\frac{\Mxt - \Mxtad}{\tau}\right)\, d\xvec,
	\end{align}	
	where in the last step we have used the Debye model differential equation \eqref{eq:debye:ode}.
	The latter result can be rearranged to 
	\begin{align}\label{eq:intermediate:s}
		\svect &
		= -\frac{1}{\tau} \left(-\mu_0 \int_{\R^n} R(\xvec) \Mxt \, d\xvec  \right)
		+  \frac{1}{\tau} \left(-\mu_0 \int_{\R^n} R(\xvec) \Mxtad \, d\xvec \right).
	\end{align} 
	By differentiating equation \eqref{eq:intermediate:s} w.r.t. time $t$ 
	and reusing the definitions of the signal $\svect$ and the Langevin model signal $\sadvec(t)$
	we derive the differential equation 
	\begin{align}
		\frac{d}{dt} \svect &
		= -\frac{1}{\tau} \left(-\mu_0 \frac{d}{dt} \int_{\R^n} R(\xvec) \Mxt\, d\xvec  \right) 
		+  \frac{1}{\tau} \left(-\mu_0 \frac{d}{dt} \int_{\R^n} R(\xvec) \Mxtad \, d\xvec \right) \\
		&= -\frac{1}{\tau}\svect + \frac{1}{\tau} \sadvec(t).
	\end{align} 
	The initial value follows from equation \eqref{eq:intermediate:s}
	\begin{align}
		\svec(0) &= -\frac{1}{\tau} \left(-\mu_0 \int_{\R^n} R(\xvec) \Mvec_0(\xvec) \, d\xvec  \right) 
		+ \frac{1}{\tau} \left(-\mu_0 \int_{\R^n} R(\xvec) \Madvec(\xvec,0) \, d\xvec  \right)
		=: \svec_0.
	\end{align}
	Variation of the constants yields the signal description in equation \eqref{eq:duhamel:s}.
\end{proof}

The initial value problem \eqref{eq:signal:ivp} constitutes the forward model of the signal 
accounting for relaxation in the continuous scenario. The forward model maps the Langevin model based signal $\sadvec$ 
to the Debye model based $\svec$ via formula \eqref{eq:duhamel:s}.
In the following we consider the corresponding inverse problem, i.e., from the measured signal $\svec$
we want to derive the corresponding Langevin model based signal $\sadvec$ 
under the assumption that the measured signal $\svec$ obeys the Debye model.
This means that we want to solve the following Volterra integral equation of the first kind
\begin{align}\label{eq:volterra:sad}
	\int\limits_0^t \frac{\e^{-\frac{(t-z)}{\tau}}}{\tau} \; \sadvec(z) \; dz  &=  \svect - \e^{-\frac{t}{\tau}} \svec(0)
\end{align}
for $\sadvec$.
In MPI applications the signal $\svect$ is discretely sampled at specific time samples.
We now consider a scan with $L \in \N$ samples in a time window of interest $[0,T]$ 
at discrete times $t_k = k \Deltat$ for $k = 0, 1, \dots , L$ with time step size $\Deltat = T/L$.
Given the discrete measured data $\sveck = \svec(t_k )$
the goal is to calculate $\sadveck \approx \sadvec(t_k)$. 
\begin{align}
	\svec (t_n ) - \e^{-\frac{n\Deltat}{\tau}}\svec(0) &=  \int_{0}^{t_n} \frac{\e^{-\frac{(t_n-z)}{\tau}}}{\tau} \sadvec(z)\, dz 
	= \sum_{k=1}^{n}\int_{t_{k-1}}^{t_k} \frac{\e^{-\frac{(t_n-z)}{\tau}}}{\tau} \sadvec (z)\, dz. \label{eq:stn:continuous}
\end{align}
We use the quadrature approach of~\cite{atkinson1967quadrature} and 
approximate $\sadvec(t)$ by a constant $\sadveck$ on the time interval $[t_{k-1},t_k]$.
This yields the following approximations of the integrals in \eqref{eq:stn:continuous} 
\begin{align}
	& \int_{t_{k-1}}^{t_k} \frac{ \e^{-\frac{(t_n-z)}{\tau}}}{\tau} \sadvec (z)\, dz
	\approx  \int_{t_{k-1}}^{t_k}\frac{e^{-\frac{(t_n-z)}{\tau}}}{\tau}\, dz \; \sadveck
	= \e^{-\frac{(n-k)\Deltat}{\tau}} \left( 1-\e^{-\frac{\Deltat}{\tau}} \right) \; \sadveck . \label{eq:quadrature}
\end{align}
where the remaining integral was resolved symbolically. 
With that and by setting $\alpha\coloneq e^{-\frac{\Deltat}{\tau}}$ 
we obtain the following discrete version of the integral equation \eqref{eq:volterra:sad}
\begin{align}\label{eq:volterra:sad:discrete}
	(1-\alpha)\sum_{k=1}^{n} \alpha^{n-k}\sadveck &= \svec_n - \alpha^n \svec_0, & 
	& \text{for} \qquad n \in \{1,\ldots,L\}.
\end{align}
By performing a step of Gaussian elimination, where we subtract 
from equation \eqref{eq:volterra:sad:discrete} for the $n$-th time step
$\alpha$ times the equation \eqref{eq:volterra:sad:discrete} for $(n-1)$-th time step, we arrive at 
\begin{align}\label{eq:relaxation:correction}
	\sadvecn &= \frac{\svec_n - \alpha \svec_{n-1}}{1-\alpha} &
	& \text{for} \quad n \in \{1,\ldots,L\}.
\end{align}
As a consequence of \eqref{eq:relaxation:correction} by knowing $\alpha$
the corresponding Langevin model based data $\sadvecn$ can be calculated
and fed into an MPI reconstruction algorithm which is based on the Langevin model.
This is the topic of section~\ref{sec:reco:algorithm}.

Before going into the details of the reconstruction algorithm we note that 
the inverse problem \eqref{eq:volterra:sad} is ill-posed and thus 
the discrete version \eqref{eq:volterra:sad:discrete} is expected to be ill-conditioned.
However, when working with the real data used in the experiments (see section~\ref{sec:experiments})
the relaxation time $\tau$ varies between $10^{-6}$ and $5 \cdot 10^{-5}$ 
while the times step size is fixed at $\Deltat = 4 \cdot 10^{-7}$. 
With that the parameter $\alpha$ varies between $0.6703\ldots$  
and $0.9920\ldots$, but stays away from $1$. Based on the row sum norm the condition number 
$\kappa_{\infty}(B) = \|B\|_{\infty} \|B^{-1}\|_{\infty}$ of the matrix $B$
which describes the discrete problem is $\kappa_{\infty}(B) = (1-\e^{-T/\tau})(1+\alpha)/(1-\alpha)$.
Given the range of $\alpha$ the condition number varies between $5$ and $251$ and is rather mild.
The corresponding error amplification can thus be handled by the regularization applied 
in later stages of the overall algorithm.
In scenarios where $\Deltat \ll \tau $ the discrete problem needs to be regularized 
since the difference operator of the current approach \eqref{eq:relaxation:correction} 
approximates the differential operator $\tau \frac{d}{dt} + 1$ of equation \eqref{eq:signal:ivp}
as $\alpha$ tends to $1$ with $\Deltat \to 0$  
and $\kappa_{\infty}(B)$ would become essentially larger. 
Studying such scenarios is a topic of future research.

\subsection{Reconstruction algorithm}\label{sec:reco:algorithm}

In what follows we describe the case of 2D reconstructions, however, 
the steps are straightforwardly generalizable to the 3D scenario.
As explained in the previous section the data is sampled at discrete times $t_k$ within the time window $[0,T]$.
In this way, the input data for a reconstruction algorithm are the sampling of the signal $\sveck = \svec (t_k )$, 
the trajectory positions $\rveck = \rvec (t_k )$ and the velocities of the trajectory $\vveck = \vvec (t_k )$ of the FFP. 
Moreover, we reconstruct $\rho$ within a region of interest -- the field of view (FOV) -- which is a 
box-shaped set $\Omega = [a,b]\times [c,d]$ which contains the trajectory $\rvect$. 
The reconstruction is performed using an $N_x\times N_y$ Cartesian grid of the FOV $\Omega$,
thus functions of the space variable $\xvec \in \Omega$ are approximated by grid functions 
which are constant on a single grid cell (pixel).

When using the Langevin model as per \eqref{eq:reco:formula:langevin} a two-stage reconstruction 
has been proposed and further developed by the authors~\cite{marz2016model,gapyak2022mdpi,gapyak2023multipatch,gapyak2025trajectoryindep,maerz2025higherorder}, 
which involves the following two steps:
\begin{enumerate}[i)]
	\item MPI Core Stage: given the input data $\svec_n$, $\rvec_n$ $\vvec_n$,  
	the MPI Core Response $A_{\hsat}[\rho ]$ is reconstructed based on the relation in \eqref{eq:reco:formula:langevin}. 
	The output of this stage is the $\R^{2 \times 2}$-valued grid function $A \in \R^{N_x\times N_y \times 2\times 2}$ 
	which approximates the MPI Core Response on the grid.
	\item Deconvolution Stage: once $A\approx A_{\hsat}[\rho ]$ is reconstructed, the relation $K_{\hsat}*\rho = A_{\hsat}[\rho]$ from 
	equation \eqref{eq:coreoperator} is used.
	Regularized deconvolution of the MPI Core Response $A$ with the kernel $K_{\hsat}$
	yields the final reconstruction $\rho$ as grid function $\rho \in \R^{N_x\times N_y}$. 
\end{enumerate}
This two stage algorithm reconstructs $\rho$ based on the Langevin model.
In section \ref{sec:debye} we have seen that, when assuming the multi-dimensional Debye model in \eqref{eq:debye:ode}, 
the measured data $\svect$ is not the same as $\sadvec(t)$ which would correspond to the Langevin model.
Therefore, this two stage algorithm would not be directly applicable to the inputs $\svec_k$, $\rvec_k$ and $\vvec_k$.
However, with equation \eqref{eq:relaxation:correction} we can perform a relaxation adaption step
which provides the corresponding Langevin model data $\sadveck$ from the measured data $\sveck$.
With the adapted inputs $\sadveck$, $\rvec_k$ and $\vvec_k$ the algorithm above can be reused. 
Consequently, a reconstruction algorithm for the multi-dimensional Debye model can be formulated 
as a three-stage reconstruction algorithm as follows:
\begin{enumerate}[i)]
	\item Relaxation Adaption Step: given the measured data $\sveck$, compute $\sadveck$ by equation \eqref{eq:relaxation:correction}. 
	\item MPI Core Stage: reconstruct the MPI Core Response $A \approx A_{\hsat}[\rho]$ using the inputs $\sadveck$, $\rvec_k$ and $\vvec_k$.
	\item Deconvolution Stage: reconstruct $\rho$ by deconvolving the MPI Core Response $A$ with the kernel $K_{\hsat}$.
\end{enumerate}
Note that by the simplicity of equation \eqref{eq:relaxation:correction} the relaxation adaption step is 
itself computationally inexpensive, while the computational costs of the MPI core stage and the deconvolution stage are unaffected.
With this three-stage algorithm we propose a reconstruction scheme that incorporates the relaxation of the nanoparticles 
according to the Debye model 
with small additional computational cost of order $\mathcal{O}(L)$, where $L$ is the sample size.

In the following paragraphs we provide more details on the MPI Core Stage and the Deconvolution Stage.

\paragraph{MPI Core Stage.}
The aim of the MPI Core Stage is the reconstruction of a discrete version 
$A \in \R^{N_x\times N_y \times 2\times 2}$ of the MPI Core Response on a $N_x\times N_y$ grid, 
given the inputs $\sadveck$, $\rvec_k$ and $\vvec_k$ along the trajectory of the FFP.  
Following the variational approach of \cite{gapyak2022mdpi} $A$ is reconstruct by energy minimization: 
\begin{align}\label{eq:first:step}
	A &= \arg\min\limits_{\hat{A}}\biggl\lbrace 
	\underbrace{\frac{1}{L} \sum\limits_{k=1}^L \left\lVert \sadveck - \interpolator \bigl [\hat{A}\bigr ](\rvec_k ) \vvec_k \right\rVert_{2}^2 }_{\coloneq F[\hat{A}]}
	+ \gamma\regularizer_C [\hat{A}]
	\biggr\rbrace .
\end{align}
Here the data fidelity term $F[\hat{A}]$ is motivated by \eqref{eq:reco:formula:langevin}
but involves an interpolation scheme $\interpolator$ since the evaluation points $\rveck$ do 
not coincide with grid cell centers.
$\regularizer_C$ denotes the chosen regularizer and $\gamma >0$ is the regularization weight. 
Following \cite{maerz2025higherorder} we employ cosine interpolation and use the $\ell^2$-norm of the Bi-Laplacian 
as regularizer $\regularizer_C$. We refer the interested reader to \cite{gapyak2022mdpi} for details on the computation 
of the Euler-Lagrange equations associated with the problem \eqref{eq:first:step} and to \cite{maerz2025higherorder} 
for details on the choice and effect of the regularizer $\regularizer_C$. 
We point out that $A \in \R^{N_x\times N_y \times 2\times 2}$ in \eqref{eq:first:step} is obtained by solving 
the Euler-Lagrange equation with the Conjugate Gradient (CG) method.

\paragraph{Deconvolution Stage and multi-kernel deconvolution.}
The aim of the Deconvolution Stage is to retrieve the target distribution $\rho$ given the output $A$ of the MPI Core Stage. 
By equation \eqref{eq:coreoperator} we have the relation $K_{\hsat}*\rho = A_{\hsat}[\rho ]$.
It has been shown in \cite{marz2016model} that the trace contains all the necessary information for the reconstruction, 
i.e., the relation $\kappa_{\hsat}*\rho = \mathrm{tr}\left( A_{\hsat}[\rho] \right)$ for the scalar-valued 
trace kernel $\kappa_{\hsat} (\yvec )$ given by
\begin{align}
	\kappa_{\hsat} (\yvec ) &= \mathrm{tr}\left( K_{\hsat}(\yvec) \right) = \frac{1}{\hsat }\kappa \left(\frac{\yvec }{\hsat}\right), &
	\kappa (\yvec ) &= \Langevin ' (\norm{\yvec }_2 )+ \frac{\Langevin (\norm{\yvec}_2)}{\norm{\yvec}_2} (n-1).
\end{align}
It was shown in \cite{gapyak2023ffl3d} that the convolution with $\kappa_{\hsat}$ defines an injective operator. 
This motivates the deconvolution with the scalar valued kernel. 
In practice, the solution of this ill-posed problem is formulated as a minimization problem of the form
\begin{align}\label{eq:dec:min}
	\rho &= \arg\min_{\hat{\rho}}\left\lbrace \lVert\kappa_{\hsat} * \hat{\rho} - u\rVert_2^2 + \lambda \regularizer_D [\hat{\rho}]\right\rbrace ,
\end{align}
where $\regularizer_D$ is a chosen regularizer, $\lambda >0$ is the parameter controlling the strength of the regularization 
and $u = \mathrm{tr}(A)$ is the trace of the MPI Core Response reconstructed in the Core Stage. 
Although the trace kernel has nice properties (such as positivity and radial symmetry) 
the corresponding deconvolution problem employs only the diagonal entries of the matrices of the matrix-field $A$. 
Because of the challenging nature of the reconstruction problem in MPI and the noisiness of real data, 
the inclusion of additional information, when available, seems favourable. 
In particular, in the case that $\svect$, $\rvect$ and $\vvect$ are sets spanning $\mathbb{R}^2$, 
then all entries of $A$ are reconstructed. 
Consequently, without considering the trace of $A$, one can think about considering the whole matrix-valued 
deconvolution problem $A = K_{\hsat}*\rho$. 
This is equivalent to consider the multi-kernel deconvolution problem $K_{\hsat}^{i,j}*\rho = A_{\hsat}^{i,j}$ 
where the indexes $i,j$ denote the $(i,j)$-th entry of the $n\times n$ matrix-field $A$ and of the matrix kernel $K_{\hsat}$. 
Mathematically, this is justified: because convolution with $\kappa_{\hsat} = \mathrm{tr}(K_{\hsat})$ 
is injective, so must be convolution with $K_{\hsat}$.
The idea of using the entries of the MPI Core Response has been introduced in \cite{maerz2022icnaam} 
but was coupled with the use of traditional regularizers ($\ell^2$-norm and TV-Smooth regularizers). 

\begin{algorithm}[t]
	\caption{Proposed three-stage reconstruction algorithm for Debye model based reconstruction.
	}\label{alg:overall}
	
	\textbf{Input}: measured data $\sveck$ with corresponding locations $\rveck$ and velocities $\vveck$, 
	relaxation parameter $\tau$,
	regularization parameter $\gamma$, $\nu_0$, $n_{\mathrm{it}}$,
	and grid size parameters $N_x,N_y$.\\
	\textbf{Output}: reconstructed particle concentration $\rho \in \R^{N_x \times N_y}$.
	
	\begin{enumerate}[1.]
		\item Relaxation Adaption Step:
		\begin{algorithmic}
			\STATE $\alpha\gets \exp{(-\frac{\Delta t}{\tau})}$;
			\FOR{$k=1$ \TO $k=L$} 
			\STATE $\sadveck \gets \frac{\sveck -\alpha\svec_{k-1}}{1-\alpha}$;
			\ENDFOR
		\end{algorithmic}
		\item MPI Core Stage:
		\begin{algorithmic}
			\STATE $(G,b) \gets$ setup$(\sadvec,r,v,\gamma,N_x,N_y)$; \hfill\COMMENT{setup $G\vect(\hat{A})=b$ from E-L equations of \eqref{eq:first:step}}
			\STATE $\vect(\hat{A}) \gets $ ConjGrad$\left(G,b\right)$; \hfill\COMMENT{solve $G\vect (A)=b$}
			\STATE $A \gets $ inverseDCT$( \hat{A} )$; \hfill\COMMENT{$\hat{A} =$ DCT$(A)$}
		\end{algorithmic}
		\item Deconvolution Stage:
		\begin{algorithmic}
			\STATE $C\gets\sum\limits_{i=1}^{2}\sum\limits_{j=1}^{2}\alpha_{i,j}^* \alpha_{i,j}$; \hfill\COMMENT{$\alpha_{i,j}$ matrix representation of $K_h^{i,j}*\rho$}
			\STATE $d\gets\sum\limits_{i=1}^{2}\sum\limits_{j=1}^{2}\alpha_{i,j}^*\vect(A^{i,j})$; \hfill\COMMENT{$A^{i,j} \in \R^{N_x \times N_y}$ components of $A$}
			\STATE $\rho_2^0\gets 0 $;
			\FOR{$k=0$ \TO $k=n_{\mathrm{it}}$} 
			\STATE $\rho_1^{k+1}\gets$ ConjGrad$\left (C + \nu_k\mathrm{Id}\, , d + \nu_k \rho_2^k \right )$; 
			\STATE $\sigma_{k+1} \gets $ Noise-Estimator$(\rho_1^{k+1})$;
			\IF {$k=0$}
			\STATE $\lambda \gets\nu_0 \cdot\sigma_1^2$
			\ENDIF
			\STATE $\tilde{\rho}_2^{k+1}\gets$Denoiser$\left (\tilde{\rho}_1^{k+1}\, ,\sigma_{k+1}\right ) $;
			\STATE $\nu_{k+1}\gets \lambda /\sigma_{k+1}^2$;
			\STATE $k\gets k+1$;
			\ENDFOR
			\RETURN $\tilde{\rho}_2^{k+1}$
		\end{algorithmic}
	\end{enumerate}
\end{algorithm}

In what follows, we integrate the Plug-and-Play technique of the recent work \cite{gapyak2025trajectoryindep}
to provide a framework for the multi-kernel deconvolution problem 
arising when one wants to use all the entries of the MPI Core Kernel.
Plug-and-Play algorithms~\cite{venkatakrishnan2013pnp} in general integrate traditional splitting schemes 
and the usage of Gaussian denoisers which can be machine-learning-based~\cite{Zhang2022pnp}.
We start the discussion by formulating the multi-kernel deconvolution problem as the minimization problem
\begin{align}\label{eq:multiker:dec:min}
	\rho &= \arg\min_{\hat{\rho}}\left\lbrace \sum_{i=1}^{n}\sum_{j=1}^{n}\lVert K_{\hsat}^{i,j} * \hat{\rho} - A^{i,j}\rVert_2^2 + \lambda \regularizer_D [\hat{\rho}]\right\rbrace ,
\end{align}
where $A^{i,j}$ is the $(i,j)$-th entry of the reconstructed MPI Core Response. 
Following \cite{gapyak2025trajectoryindep}, we formulate the equivalent constrained minimization problem
\begin{align}\label{eq:multiker:dec:min:2}
	\rho &= \arg\min_{\rho_1 ,\, \rho_2}
	\left\lbrace\sum_{i=1}^{n}\sum_{j=1}^{n}\lVert K_{\hsat}^{i,j} * \rho_1 - A^{i,j}\rVert_2^2 + \lambda \regularizer_D [\rho_2]\right\rbrace &
	& \text{s.t.} \quad \rho_1 = \rho_2\, .
\end{align}
To solve \eqref{eq:multiker:dec:min:2} we consider half quadratic splitting, i.e., we consider the Lagrangian 
\begin{align}
	\mathscr{L}_{\nu}(\rho_1 , \rho_2) &= \lVert\kappa_h * \rho_1 - u\rVert_2^2 + \lambda \regularizer_D [\rho_2] + \nu\lVert \rho_1 - \rho_2\rVert_2^2 ,
\end{align}
with multiplier $\nu >0$, and minimize it by minimizing it iteratively in the two variable $\rho_1$ and $\rho_2$. 
The resulting scheme is
\begin{align}
	\rho_1^{k+1} & = \arg\min_{\rho_1}
	\left\lbrace\sum_{i=1}^{n}\sum_{j=1}^{n}\lVert K_{\hsat}^{i,j} * \rho_1 - A^{i,j}\rVert_2^2 +  \nu_k\left\lVert \rho_1-\rho_2^k \right\rVert_2^2\right\rbrace \label{eq:dec:sub:tik0}\\
	\rho_2^{k+1} & = \arg\min_{\rho_2}
	\left\lbrace\lambda \regularizer_D [\rho_2 ]+\nu_k\left\lVert \rho_1^{k+1}-\rho_2 \right\rVert_2^2 \right\rbrace , \label{eq:dec:sub:den:gauss}
\end{align}
where we have allowed the multiplier $\nu$ to vary at each iteration, a choice that yields the parameters 
$\nu_k$ in \eqref{eq:dec:sub:tik0}. Mathematically, the term in \eqref{eq:dec:sub:den:gauss} is the regularized 
denoising (inversion of the identity with) with assumption of Gaussian noise with noise level $\sqrt{\lambda /\nu_k}$. 
The idea behind Plug-and-Play approaches \cite{venkatakrishnan2013pnp,Zhang2022pnp} 
is that any Gaussian denoiser can be plugged in to address the denoising subproblem in equation \eqref{eq:dec:sub:den:gauss}. 
Here the denoising capabilities of modern machine learning based Gaussian denoisers 
can be leveraged~\cite{Zhang2022pnp}. 
Consequently, one can formulate the final deconvolution algorithm as the splitting scheme
\begin{align}
	\rho_1^{k+1} & = \arg\min_{\rho_1}
	\left\lbrace\sum_{i=1}^{n}\sum_{j=1}^{n}\lVert K_{\hsat}^{i,j} * \rho_1 - A^{i,j}\rVert_2^2 +  \nu_k\left\lVert \rho_1-\rho_2^k \right\rVert_2^2\right\rbrace \label{eq:dec:sub:tik}\\
	\rho_2^{k+1} & = \mathrm{Denoiser}\left (\tilde{\rho}_1^{k+1}\, , \sqrt{\lambda /\nu_k}\right ) , \label{eq:dec:sub:den}
\end{align}
where $\mathrm{Denoiser}$ denotes a trained denoiser and $\tilde{\cdot}$ represents the reshaping operator 
that turns vectors into 2D $N_x\times N_y$ arrays. Following \cite{gapyak2025trajectoryindep} 
for the scalar deconvolution Plug-and-Play algorithm we use the 
publicly available and pre-trained \emph{deep denoiser prior} from \cite{Zhang2022pnp} in a zero-shot fashion,
i.e., without any re-training or fine-tuning.
Further, we employ the parameter selection scheme from \cite{gapyak2025trajectoryindep}. 
The difference with the algorithm in \cite{gapyak2025trajectoryindep} lays 
in a different data fidelity term and thus a different subproblem given by equation \eqref{eq:dec:sub:tik}. 
The subproblem \eqref{eq:dec:sub:tik}, however, can be approached as before by solving the corresponding linear Euler-Lagrange equations. 
For more details on the implementation of the algorithm, we refer the interest reader to \cite{gapyak2025trajectoryindep}. 
The consequence of the parameter selection scheme from \cite{gapyak2025trajectoryindep} is that 
the only parameters needed for the scheme are the 
starting parameter $\nu_0$ and the number of iterations $n_{\mathrm{it}}$.
The steps of the proposed three-stage algorithm are summarized in algorithm~\ref{alg:overall}.

We remark that the multi-kernel data fidelity term in equation \eqref{eq:multiker:dec:min}
provides flexibilty: above we have use it to include all the information available,
but it can be adapted to reconstruct $\rho$ when only partial information is at our disposal~\cite{maerz2022icnaam}. 
To clarify this point, one could consider the data fidelity term
\begin{equation}\label{eq:multiker:dec:min:partial}
	\sum_{i=1}^{n}\sum_{j=1}^{n}\beta_{i,j} \lVert K_{\hsat}^{i,j} * \hat{\rho} - A^{i,j}\rVert_2^2 
\end{equation}
where $\beta_{i,j}$ is 1 if $A^{i,j}$ is available, and 0 if $A^{i,j}$ is not available. 
A use case for this is when only data of one recording channel is available or reliable: 
if only the $x$-component of the signal $\svect$ is available, 
then only the entries $A^{1,1}$ and $A^{1,2}$ can be reconstructed in the Core Stage 
and only those can be used for the deconvolution. 
In particular, in this case, the deconvolution using the trace is not possible, 
as only the first diagonal entry of $A$ can be reconstructed.

\section{Experiments}\label{sec:experiments}

\begin{figure}[t]
	\centering
	\begin{subfigure}[t]{0.20\textwidth}
		\includegraphics[width=\linewidth]{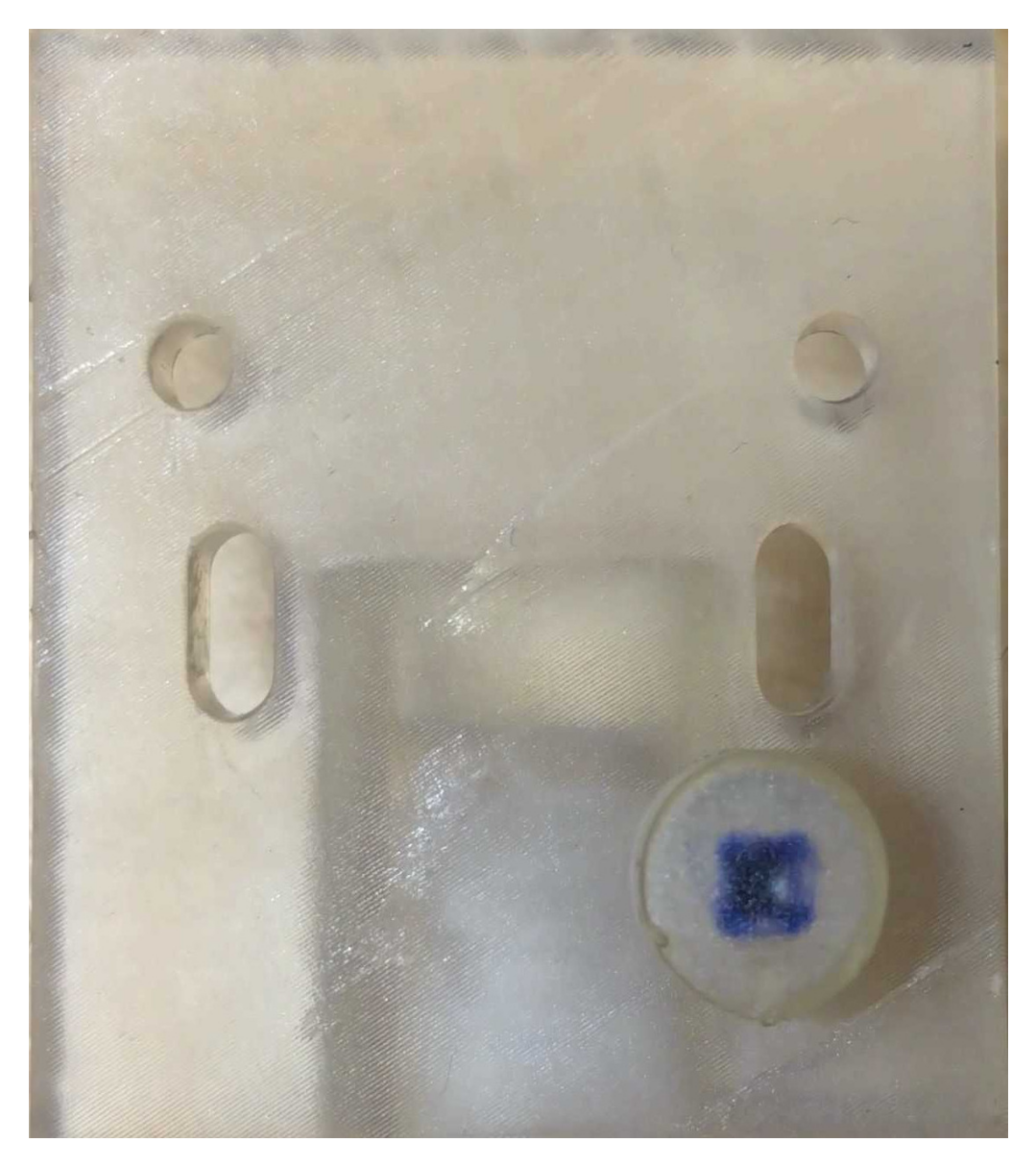}
		\caption{Dot}
		\label{subfig:dot}
	\end{subfigure}
	\begin{subfigure}[t]{0.20\textwidth}
		\includegraphics[width=\linewidth]{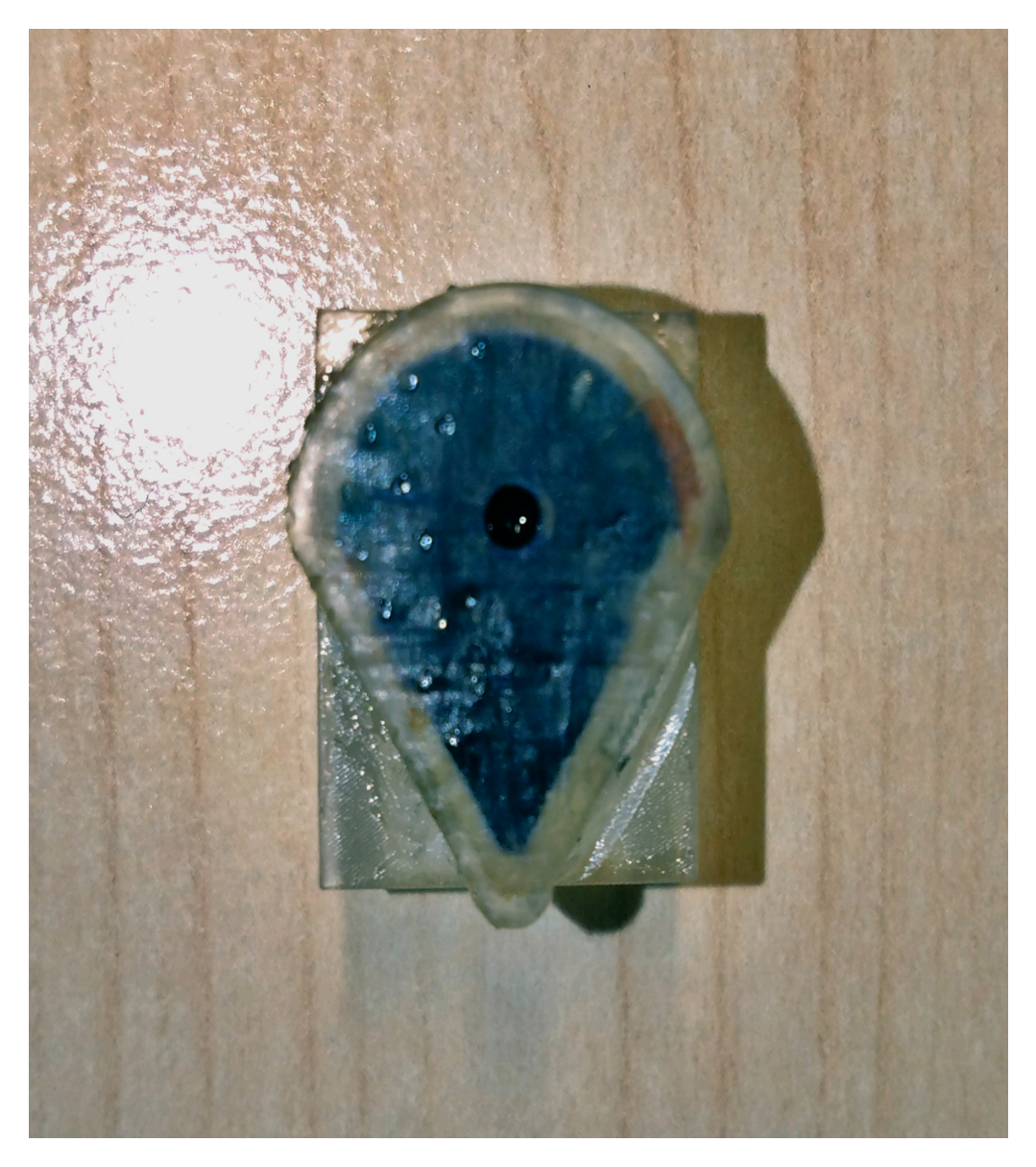}
		\caption{IceCream}
		\label{subfig:icecream}
	\end{subfigure}
	\begin{subfigure}[t]{0.20\textwidth}
		\includegraphics[width=\linewidth]{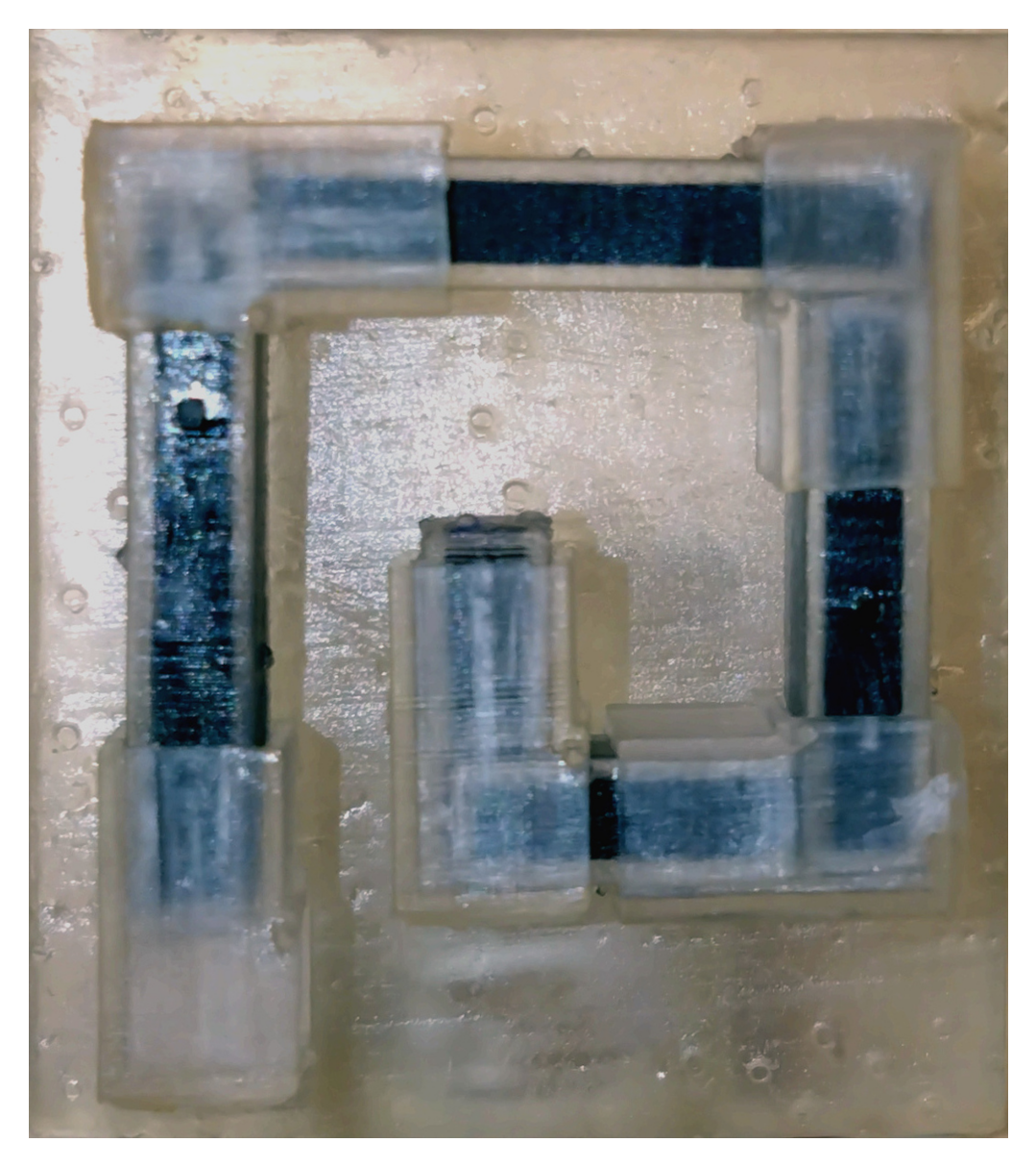}
		\caption{Snail}
		\label{subfig:snail}
	\end{subfigure}
	\caption{Pictures of 3 of the phantoms that are scanned in the real dataset~\cite{knopp2024equilibriumdata}. Reproduced from~\cite{knopp2024equilibriumdata}, \href{https://creativecommons.org/licenses/by/4.0/}{CC BY 4.0}.}
	\label{fig:real:phantoms}
\end{figure}

In this section we show the potential of our method in a series of experiments. 
After discussing preprocessing, we first perform qualitative and quantitative evaluation 
in a simulated scenario, where the ground truth is available. 
Then, we study the effects of the porposed relaxation adaption 
on the real 2D MPI data contained in the 
dataset~\cite{knopp2024equilibriumdata}. The three phantoms considered from the dataset 
in~\cite{knopp2024equilibriumdata} are displayed in figure~\ref{fig:real:phantoms}.

\subsection{Preprocessing of real data.} 
When dealing with real MPI data, a series of preprocessing steps are usually necessary. 
We briefly explain these steps and underline one specific step involving so called transfer functions. 

\paragraph{SNR-thresholding}
Real MPI data are noisy and a signal-to-noise ratio (SNR) of each frequency of the signal is usually estimated from a series of empty scans. With this SNR values, one has to some extent an overview of what frequencies are reliable and which ones are unreliable. This SNR values are used in the so-called SNR-thresholding step: a threshold $\Theta_i$ is set for each channel $i$, and the frequencies of the signal whose SNR is below $\Theta_i$ are discarded.

\paragraph{High-pass filtering.}
A further preprocessing step usually involves applying high-pass filters (HP) that cut out low frequencies. In what follows we will apply the HP filter that cuts all frequencies below the frequency $\hpf$
\begin{equation}
	\hpf = \lfloor \max\lbrace f_x, f_y, f_x\rbrace * n\rfloor + 100 ,
\end{equation}
for a chosen parameter value $n$. The effect of the high pass filter is to cut slightly above the n-th harmonic of the frequencies of the scanning trajectory.

\paragraph{Analog filtering transfer function.}
A next preprocessing step involves the division of the Fourier-transformed signal with 
the transfer function $\analogft$ related to the analog filtering (AF) 
and described in \eqref{eq:general:signal:s}. This function is usually provided as part of the 
scanner's specific data acquisition process. We call this transfer function the AF-transfer 
function (AF-TF). 

\paragraph{Model transfer function.}
Finally, typically a second transfer function is involved.
We now describe it to clarify an important aspect of the reconstruction presented in this paper. 
The second transfer function used in preprocessing 
is the model-transfer function (M-TF) first described in~\cite{KnoppBiederer_etal2010}. 
It is obtained from calibration data. The procedure can be described as follows: a known particle 
concentration is scanned in numerous (and known) positions in the FOV; this concentration is usually 
small and shaped as a voxel, such that the scan of it corresponds to measuring the response of the 
scanner to $\delta$-impulses. This $\delta$-concentration is moved and scanned in all grid-cells 
within a chosen grid discretizing the FOV. At each grid  position k, a scan $\svec_{\mathrm{real}}^{k} (t)$ 
is obtained and stored. In a subsequent step, the same calibration procedure is simulated using 
the Langevin model, yielding a set of corresponding scans $\svec_{\mathrm{sim}}^{k} (t)$. Under the assumption 
that $\svec_{\mathrm{real}}^{k}  = \avec_M * \svec_{\mathrm{sim}}^{k} $, one then retrieves the M-TF $\avec_M$ in a 
least square fashion. In this way, the deconvolved data $\hat{\svec}_{\mathrm{real}}^{k}/\hat{\avec}_M$ is 
cleaned of those convolutional components that make the real data non obeying the Langevin model. 
This employment of calibration data to obtain a M-TF and to preprocess the data has been applied 
in previous publications that employed the Langevin model on the real data 
in~\cite{knopp2024equilibriumdata}. The advantage of this method, is that the potentially complex 
convolutional component of the real data is taken care of and that it brings the data closer to the scenario 
modeled by the Langevin model, thus allowing for the usage of Langevin-based reconstruction algorithms. 
On the other hand, the usage of an M-TF requires calibration data and consequently, reconstructions 
obtained with such a correction cannot be considered as purely model-based but rather as hybrid 
reconstruction. 

In this paper however, we \emph{do not} employ the M-TF. In particular, with the 
framework derived here from the Debye model we show reconstructions of the real 2D MPI data in a fully 
model-based scenario using the three step algorithm developed in section~\ref{sec:reco:algorithm} 
which includes the relaxation adaption step to MoBit-2S. An overview of the parameters used for the 
real data is given in table~\ref{tab:params}.

\subsection{Qualitative and quantitative evaluation on simulated data.}
We have produced five simulated phantoms immitating the phantoms contained in the real dataset. 
These phantoms have been obtained using characters from the font \emph{Liberation Sans Font Regular}
\footnote{Liberation Sans Font Regular is licensed under GNU general public license (GPL) and available at \href{https://www.1001fonts.com/liberation-sans-font.html}{https://www.1001fonts.com/liberation-sans-font.html}}. 
The simulations have been performed generating the ground truths and scanning them along the Lissajous trajectory 
\begin{equation}\label{eq:lissajous}
	\rvect = \flexpar*{A_x \cos \flexpar*{2\pi f_x t}, \, A_y \cos \flexpar*{2\pi f_y t}}
\end{equation}
with amplitudes $A_x = A_y = 0.012\ \si{\tesla}/\mu_0$ and frequencies $f_x = \frac{2.5}{102}\ \si{\mega\hertz}$, $f_Y = \frac{2.5}{96}\ \si{\mega\hertz}$. 
The repetition time (time of scan) is $652.8\ \si{\micro\second}$ and the sampling time step size 
is $\Deltat = 4\times 10^{-7}\ \si{\second}$, resulting in $L=1632$ samples. The gradient has been simulated 
to be $\mu_G = \diag (-1,-1,2)\ \si{\tesla\per\meter}$. For the simulation of the particles we have 
computed $\hsat$ in \eqref{eq:hsat} using a temperature of $T_B = 293\ \si{\kelvin}$, a saturation 
magnetization $\msat = 4.74\times 10^{5}\ \si{\joule\tesla\per\meter\cubed}$ and a core diameter 
of $d = 21\ \si{\nano\meter}$. Finally, we have simulated a Debye delay parameter 
$\tau_{\mathrm{GT}} = 5\times 10^{-6}\ \si{\second}$ and added additional white Gaussian noise  
to the simulated scans such that the SNR is of $40\ \si{\deci\bel}$. We reconstruct the target 
concentrations on a $50\times 50$ grid within the FOV $\Omega = [-A_x ,A_x]\times [-A_y , A_y]$ using the 
relaxation adaption step and the MoBiT-2S algorithm.

In order to reduce the amount of parameters to validate, we first fix the parameter $\gamma$ for the core stage 
in \eqref{eq:first:step}. To this aim, we have chosen to perform reconstruction with the relaxation adaption 
parameter $\tau$ matching the ground truth relaxation $\tau_{\mathrm{GT}}$; after this, we have performed the 
core stage for a set of parameters $\gamma$ in $\lbrace i\cdot 10^j\colon i = 0, \dots ,9\wedge j = -7, -6, -5 \rbrace$. 
Given the outcome $A_{\gamma}\in\mathbb{R}^{2\times 2\times 50 \times 50}$ of the core stage, taken their 
traces $u_{\gamma}=\mathrm{tr}A_{\gamma}$ and have computed the PSNR with the ground truth 
$u_{\mathrm{GT}} = \mathrm{tr}A_{\mathrm{GT}}$. The parameter $\gamma^{*}$ that maximizes the PSNR averaged across 
all phantoms has been selected as the fixed reconstruction parameter for the core stage. The average PSNR curve for 
the parameter $\gamma$ is plotted in figure~\ref{subgfig:exp1:gamma:psnr}. The maximum PSNR value corresponds 
to $\gamma^* = 7\cdot 10^{-7}$. In figure~\ref{subgfig:exp1:gamma:ssim} we have displayed also the average SSIM curve.

With $\gamma^*$ fixed, we now perform the full reconstruction varying the relaxation adaption 
parameter $\tau$ in the set $\lbrace 0 \rbrace \cup \lbrace i\cdot 10^j\colon i = 0, \dots ,9\wedge j = -7, -6, -5 \rbrace$ 
and the starting parameter $\nu_0$ for the deconvolution stage in \eqref{eq:dec:sub:tik} 
in $\lbrace 10^{-5},10^{-6},10^{-7},10^{-8},10^{-9}\rbrace$ and for $n_{\mathrm{it}}=10$ iterations. 
We have then computed the PSNR values between the reconstructions $\rho_{\tau , \nu_0 ,n_{\mathrm{it}}}$ 
with the ground truth $\rho_{\mathrm{GT}}$ and selected the parameters $(\tau^* ,\nu_0^* , n_{\mathrm{it}}^* )$ 
that maximize the average PSNR. The values of the average PSNR for varying $\tau$ and $\nu_0$ (but $n_{\mathrm{it}}$) 
are displayed in figure~\ref{subfig:exp1:taunu:psnr}. The analogous values for the average SSIM score are displayed 
in figure~\ref{subfig:exp1:taunu:ssim}.
\begin{figure}[t!]
	\centering
	\begin{subfigure}[t]{0.45\textwidth}
		\includegraphics[width=\linewidth]{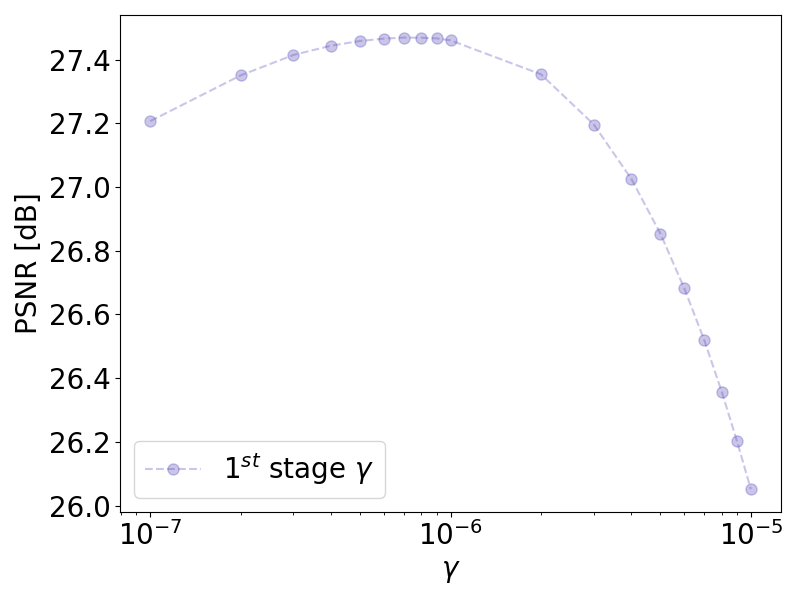}
		\caption{Average PSNR curve with adaption parameter $\tau = 5\cdot 10^{-6}$ coinciding with the simulated delay $\tau_{\mathrm{GT}}$, for a variety of regularization parameters $\gamma$ for the Core Stage. The maximum PSNR value corresponds to the parameter $\gamma^* = 7\cdot 10^{-7}$.}
		\label{subgfig:exp1:gamma:psnr}
	\end{subfigure}
	\hfil
	\begin{subfigure}[t]{0.45\textwidth}
		\includegraphics[width=\linewidth]{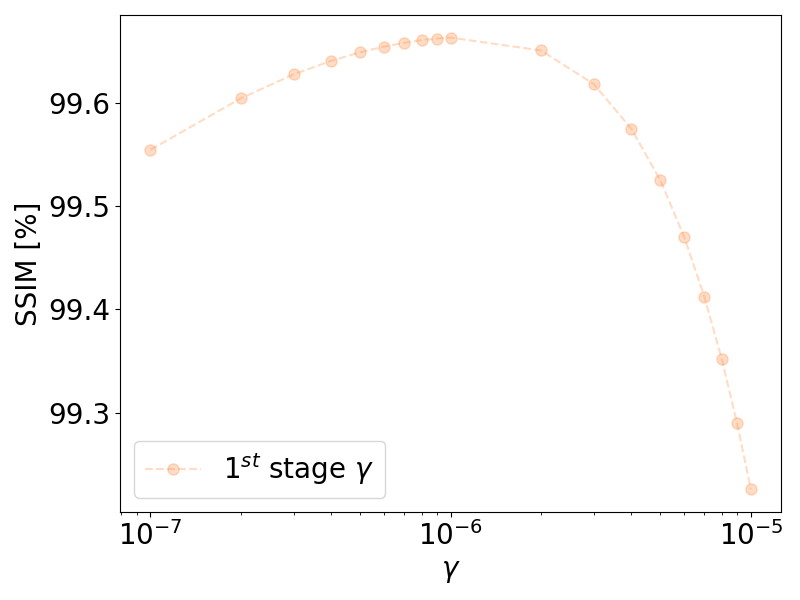}
		\caption{Average SSIM curve with adaption parameter $\tau = 5\cdot 10^{-6}$ coinciding with the simulated delay, for a variety of regularization parameters $\gamma$ for the Core Stage. }
		\label{subgfig:exp1:gamma:ssim}
	\end{subfigure}
	\par
	\begin{subfigure}[t]{0.45\textwidth}
		\includegraphics[width=\linewidth]{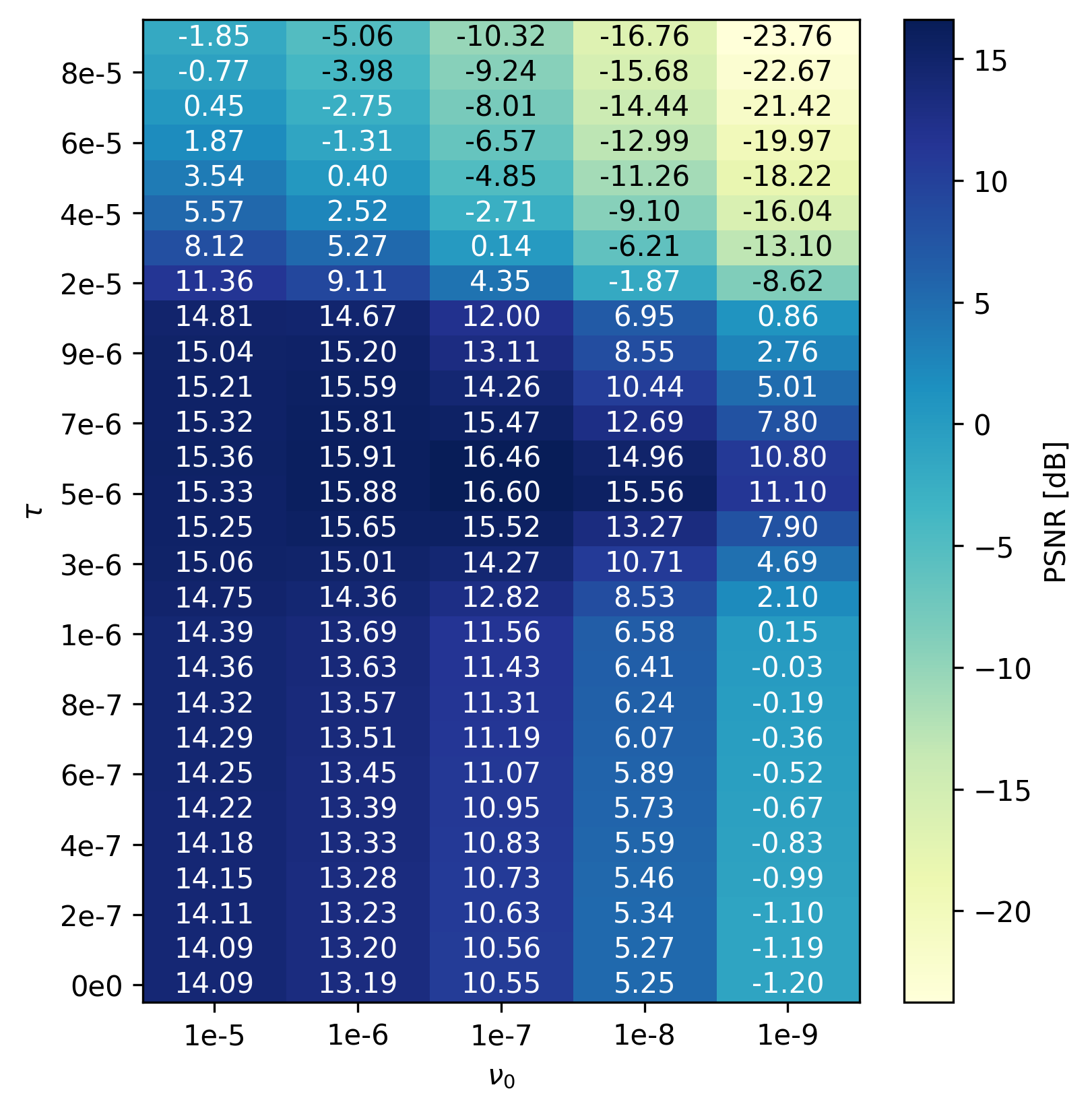}
		\caption{Average PSNR values of the final reconstruction for all combinations of relaxation correction parameters $\tau$ and Deconvolution Stage parameters $\nu_0$, with $\gamma = \gamma^*$ fixed for all reconstruction. In accordance with expectation, the absolute maximum value corresponds to the choice of $\tau = \tau_{\mathrm{GT}} = 5\cdot 10^{-6}$. The optimal deconvolution parameter is $\nu_0^* = 10^{-7}$ and $n_{\mathrm{it}}^* = 10$.}
		\label{subfig:exp1:taunu:psnr}
	\end{subfigure}
	\hfil
	\begin{subfigure}[t]{0.45\textwidth}
		\includegraphics[width=\linewidth]{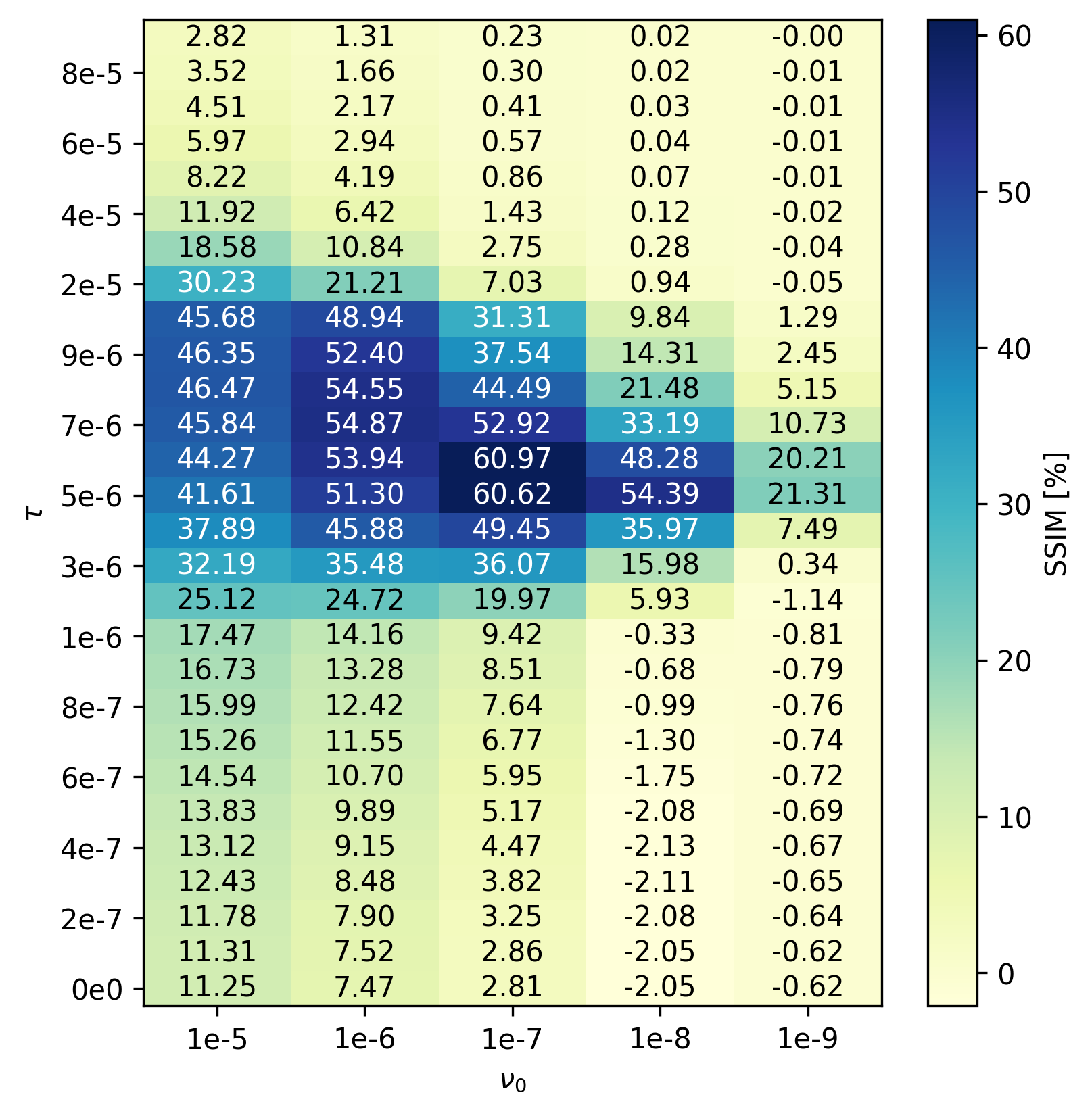}
		\caption{Average SSIM values of the final reconstruction for all combinations of relaxation correction parameters $\tau$ and Deconvolution Stage parameters $\nu_0$, with $\gamma = \gamma^*$ fixed for all reconstruction.}
		\label{subfig:exp1:taunu:ssim}
	\end{subfigure}
	\caption{Overview of the average PSNR values over the simulated phantoms.}
	\label{fig:exp1}
\end{figure}
From these tables we observe that the highest PSNR score obtained for the whole reconstruction is attained when the 
correction parameter $\tau$ coincides with the ground truth delay $\tau_{\mathrm{GT}} = 5\cdot 10^{-6}\ \si{\second}$.

\begin{figure}[t]
	\includegraphics[width=\linewidth]{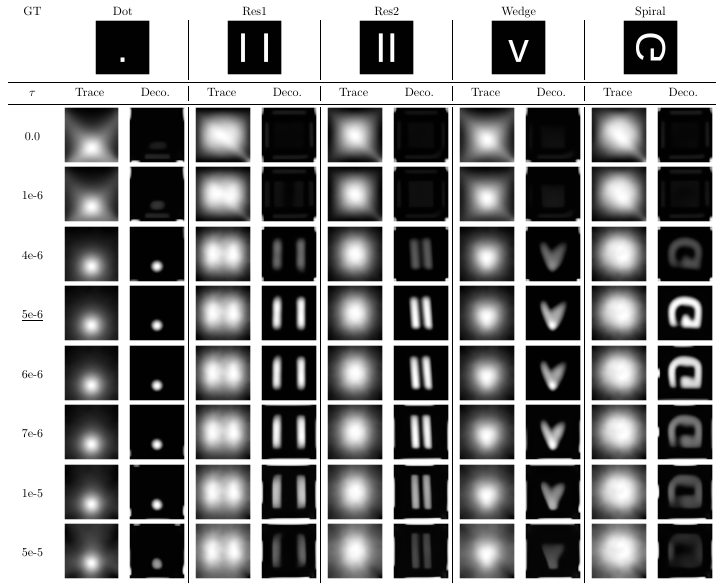}
	\caption{Reconstruction results after the MPI Core Stage (Trace) and the Deconvolution Stage (Deco.) of the 5 simulated phantoms 
		for a variety of relaxation adaption parameters $\tau$ (rows). The ground truth delay $\tau_{\mathrm{GT}} = 5\cdot 10^{-6}$ 
		is underlined. We have diplayed the traces obtained after the MPI Core Stage with the parameter $\gamma^*$. 
		The final reconstructions are obtained using the deconvolution parameter $\nu_0^* = 10^{-7}$ using all entries of 
		the MPI Core Response. We observe that the reconstructions obtained with $\tau = \tau_{\mathrm{GT}} = 5\cdot 10^{-6}$ 
		are among the closest to the ground truths and exhibit the smallest amount of artifacts.}
	\label{fig:simulations}
\end{figure}

For a qualitative underpinning of this proof of concept, we display the reconstructions obtained in 
figure~\ref{fig:simulations}. We observe that the reconstructions look closer to the ground truths when the 
relaxation adaption $\tau$ corresponds to the ground truth delay of $\tau_{\mathrm{GT}} = 5\cdot 10^{-6}$. 
Moreover, we observe that when the relaxation adaption parameter $\tau$ is too small, the reconstructed traces 
are strongly blurred and the deconvolution does not retrieve meaningful information about the underlying distribution. 
On the other hand, when the adaption parameter $\tau$ is too large, one observes the increasing number of boundary 
effects on the final reconstructions.

\begin{table}[t]
	\centering
	\setlength{\tabcolsep}{2pt}
	\renewcommand{\arraystretch}{1.15}
	
	\begin{tabular}{|l|
			c|c|c|c|c|c|
			c|c|c|c|
			c|c|c|c|c|c|}
		\hline
		
		& \multicolumn{6}{c|}{\textbf{Preprocessing}}
		& \multicolumn{4}{c|}{\textbf{Core Stage}}
		& \multicolumn{6}{c|}{\textbf{Deconvolution Stage}}
		\\ \hline

		\textbf{Phantom}
		
		& \rotatebox{90}{AF-TF}
		& \rotatebox{90}{M-TF}
		& \rotatebox{90}{HPF}
		& \rotatebox{90}{$\mathrm{SNR}_x$}
		& \rotatebox{90}{$\mathrm{SNR}_y$}
		& \rotatebox{0}{$\tau$}
		
		& \rotatebox{90}{Reg.~Ord.}
		& \rotatebox{90}{CG maxit}
		& \rotatebox{90}{CG tol.}
		& \rotatebox{0}{$\gamma$}
		
		& \rotatebox{90}{CG maxit}
		& \rotatebox{90}{CG tol.}
		& \rotatebox{0}{$n_{\mathrm{it}}$}
		& \rotatebox{0}{$\nu_0$}
		& \rotatebox{90}{\% cut}
		& \rotatebox{90}{\% padding}
		\\
		\hline
		
		\textbf{Dot}
		& $\cmark$ & $\xmark$ & 3 & 1 & 1 & $10^{-6}$
		& 2 & 15k & $10^{-6}$ & $10^{-7}$
		& 10k & $10^{-12}$ & 10 & $10^{-5}$ & 5 & 5
		\\ \hline
		
		\textbf{Icecream}
		& $\cmark$ & $\xmark$ &  2 & 1 & 1 & $10^{-6}$
		& 2 & 15k & $10^{-6}$ & $10^{-7}$
		& 10k & $10^{-12}$ & 10 & $10^{-5}$ & 5 & 5
		\\ \hline
		
		\textbf{Spiral}
		& $\cmark$ & $\xmark$ & 2 & 3 & 3 & $2 \cdot 10^{-6}$
		& 2 & 15k & $10^{-6}$ & $10^{-7}$
		& 10k & $10^{-12}$ & 10 & $10^{-5}$ & 5 & 5
		\\ \hline
		
	\end{tabular}%
	
	\caption{Overview of the parameters used in the reconstructions of the real MPI 2D data. 
		All reconstructions have been performed on a $51\times 45$ grid discretization 
		of the FOV using the data collected in the DF-FOV.}
	\label{tab:params}
\end{table}

\begin{figure}[t]
	\centering
	\begin{subfigure}[t]{0.49\textwidth}
		\includegraphics[width=\linewidth]{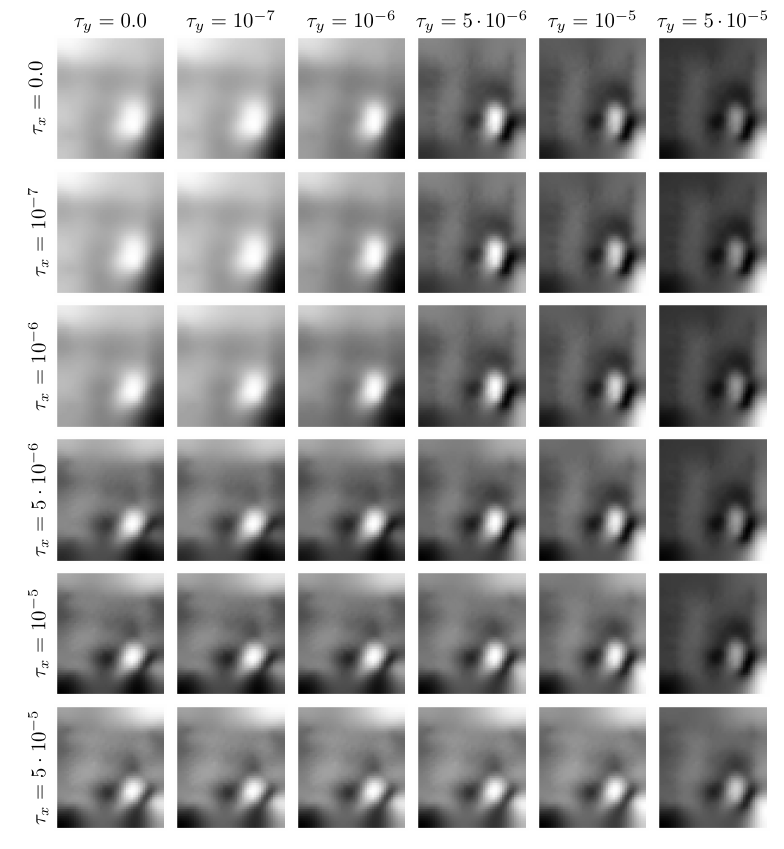}
		\caption{Traces with different $\tau_x$ and $\tau_y$.}
		\label{subfig:dot:traces}
	\end{subfigure}
	\hfil
	\begin{subfigure}[t]{0.49\textwidth}
		\includegraphics[width=\linewidth]{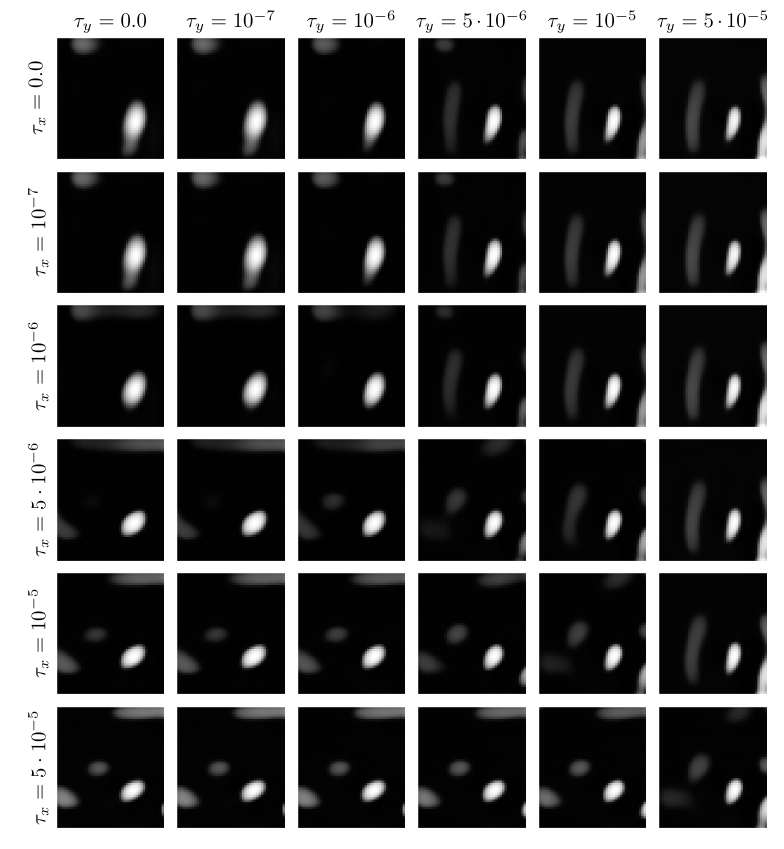}
		\caption{Reconstructions with different $\tau_x$ and $\tau_y$.}
		\label{subfig:dot:recos}
	\end{subfigure}
	\caption{Reconstruction results on the dot phantom in figure~\ref{subfig:dot} for a variety of 
		relaxation adaption parameters $\tau_x$ and $\tau_y$. All other parameters are fixed. We observe in 
		figure~\ref{subfig:dot:traces}, that when $\tau_x$, $\tau_y < 10^{-6}$, the trace and the subsequent deconvolution 
		are a blurry version of the dot phantom. Increasing the size of $\tau_x$, $\tau_y$ helps obtaining a trace and a reconstruction 
		in which the dot-phantom is less blurred. Observing the reconstructions in figure~\ref{subfig:dot:recos} row-wise and 
		column-wise suggests that increased correction in one of the channels helps obtaining less blur in that direction. 
		On the other hand, we observe that, when the relaxation parameters $\tau_x$, $\tau_y\geq 5\cdot 10^{-6}$, 
		reconstruction artifacts appear more strongly, the bigger the relaxation parameter.}
	\label{fig:dot:various:tau}
\end{figure}

\subsection{Effect of relaxation adaption on real data}
In this section reconstrcutions are performed on real data from the dataset~\cite{knopp2024equilibriumdata}.
Concerning the choice of a discretization grid, the FOV in the dataset~\cite{knopp2024equilibriumdata} 
is of size $34\ \si{\milli\meter}\times 30\si{\milli\meter}$ 
and the usual discretization is a $17\times 15$ grid~\cite{maass2024equilibriumanysotropy}. 
In the experiments of this section we consider a 3-fold refinement of the 
reconstruction grid $N_x\times N_y = 51\times 43$.
We note further that the FOV is larger than the drive-field field of view (DF-FOV), 
which is the smallest axis parallel box containing the scanning curve,
where the data is collected.

\subsubsection{Effect of channel-wise relaxation adaption on a real phantom.}
In this second experiment we consider the dot phantom contained in figure~\ref{subfig:dot}
and study the effect of relaxation adaption. 
To this end we perform reconstruction using the relaxation adaption step 
for a set of values $\tau \in \lbrace 0, 10^{-7},10^{-6}, 10^{-5},5\cdot 10^{-5}\rbrace$. 
All other parameters of the method are fixed according to table~\ref{tab:params}.
The relaxation adaption has been performed for each channel separately, so that we have corrected 
the $x$-channel with a relaxation parameter $\tau_x$ and the $y$-channel with a parameter $\tau_y$. 
In this way, the effect of the relaxation adaption on both the trace and the final deconvolution is more visible. 
An overview of the reconstructions is displayed in figure~\ref{fig:dot:various:tau}.

We observe from the traces in figure~\ref{subfig:dot:traces} that the effect of a lack of adaption (upper left quarter) 
corresponds to a blurring of the trace. This phenomenon is close to the one observed in the simulated scenario 
in \ref{fig:simulations}. On the other hand, a relaxation adaption parameter $\tau$ which is too high, yields artifacts. 
From the visual results obtained in the final reconstruction in figure~\ref{subfig:dot:recos}, it appears that the relaxation 
parameter that yields reconstruction that are visually similar to the underlying phantom in figure~\ref{subfig:dot} 
is close to the values of $10^{-6}\ \si{\second}$. Finally, observing the results in figure~\ref{fig:dot:various:tau} 
row-wise (resp. column-wise), the relaxation adaption step performed channel wise, seem to have a corresponding deblurring 
effect on the channel's direction. This phenomenon can be seen by inspecting any of the columns in~\ref{subfig:dot:recos}. 
For each column (fixed $\tau_y$), increase of the parameter $\tau_x$ corresponds to a compression of the reconstruction 
along the vertical direction (corresponding to the $x$-channel). Conversely, for any fixed $\tau_x$, inspection of the 
related row shows compression of the reconstruction in the horizontal direction (corresponding to the $y$-channel). 
Coherently, the diagonal entries of figure~\ref{fig:dot:various:tau} show similar compression of the reconstructed 
phantom along both directions. This indicates that the choice of a unique relaxation adaption parameter $\tau = \tau_x = \tau_y$ 
for both channels is sensible.

\subsubsection{Effect of the relaxation adaption on further phantoms}

In our last experiment, we show reconstructions on three phantoms in figure~\ref{fig:real:phantoms} and in particular, 
we show that the relaxation adaption step can help in reconstructing features of the underlying particle distributions 
that are not visible by considering simply the Langevin model. 
The chosen relaxation parameters as well as the preprocessing and reconstruction 
parameters are displayed in table~\ref{tab:params}. These parameters have been selected by visual inspection of 
the reconstructions obtained. The final reconstructions are displayed in figure~\ref{fig:recos}. We observe that the 
shape and the main features of the phantoms are reconstructed. 

\begin{figure}[!th]
	\centering
	\begin{subfigure}[t]{0.25\textwidth}
		\includegraphics[width=\linewidth]{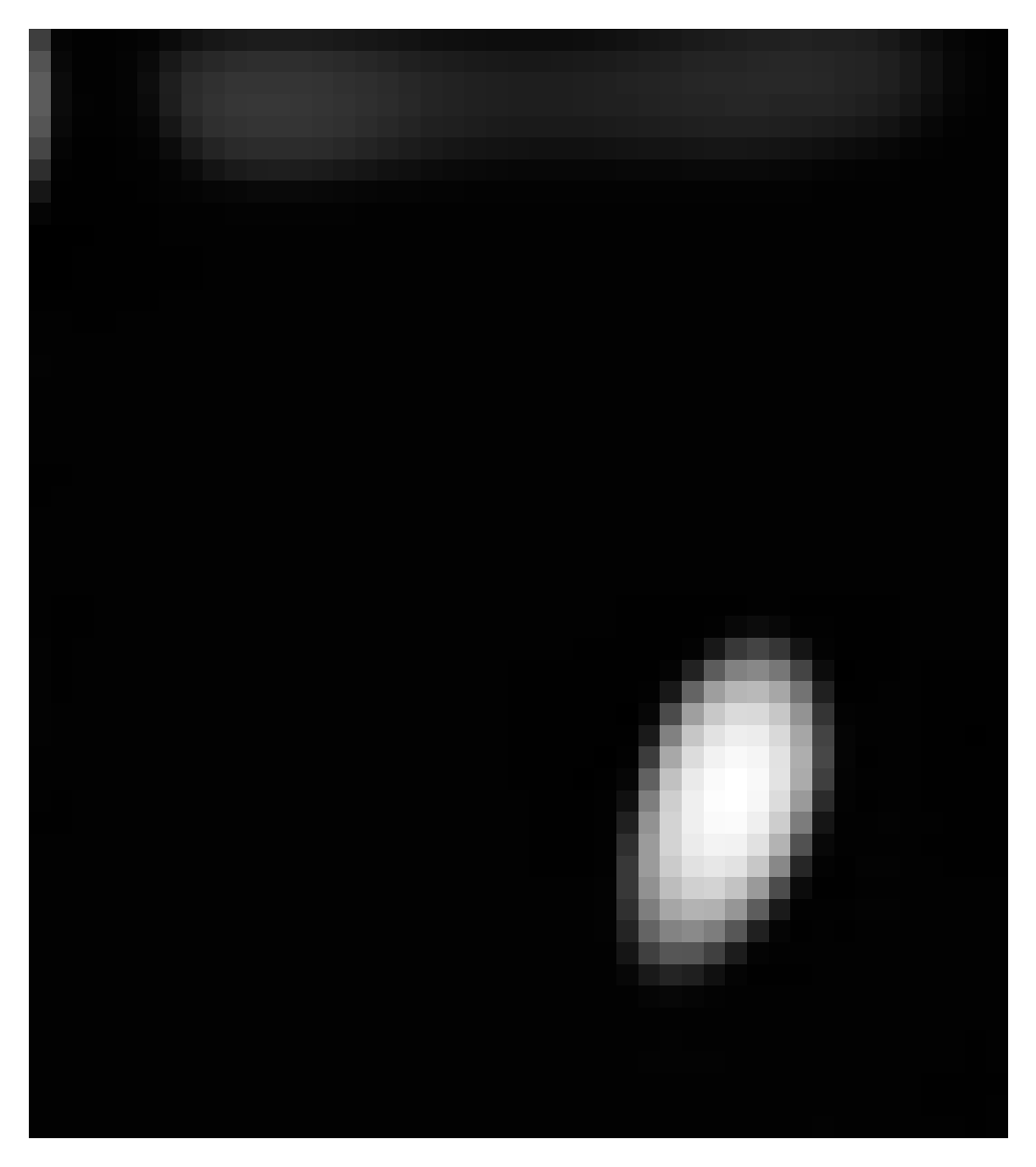}
		\caption{Dot, $\tau = 10^{-6}$}
		\label{subfig:reco:dot}
	\end{subfigure}
	\begin{subfigure}[t]{0.25\textwidth}
		\includegraphics[width=\linewidth]{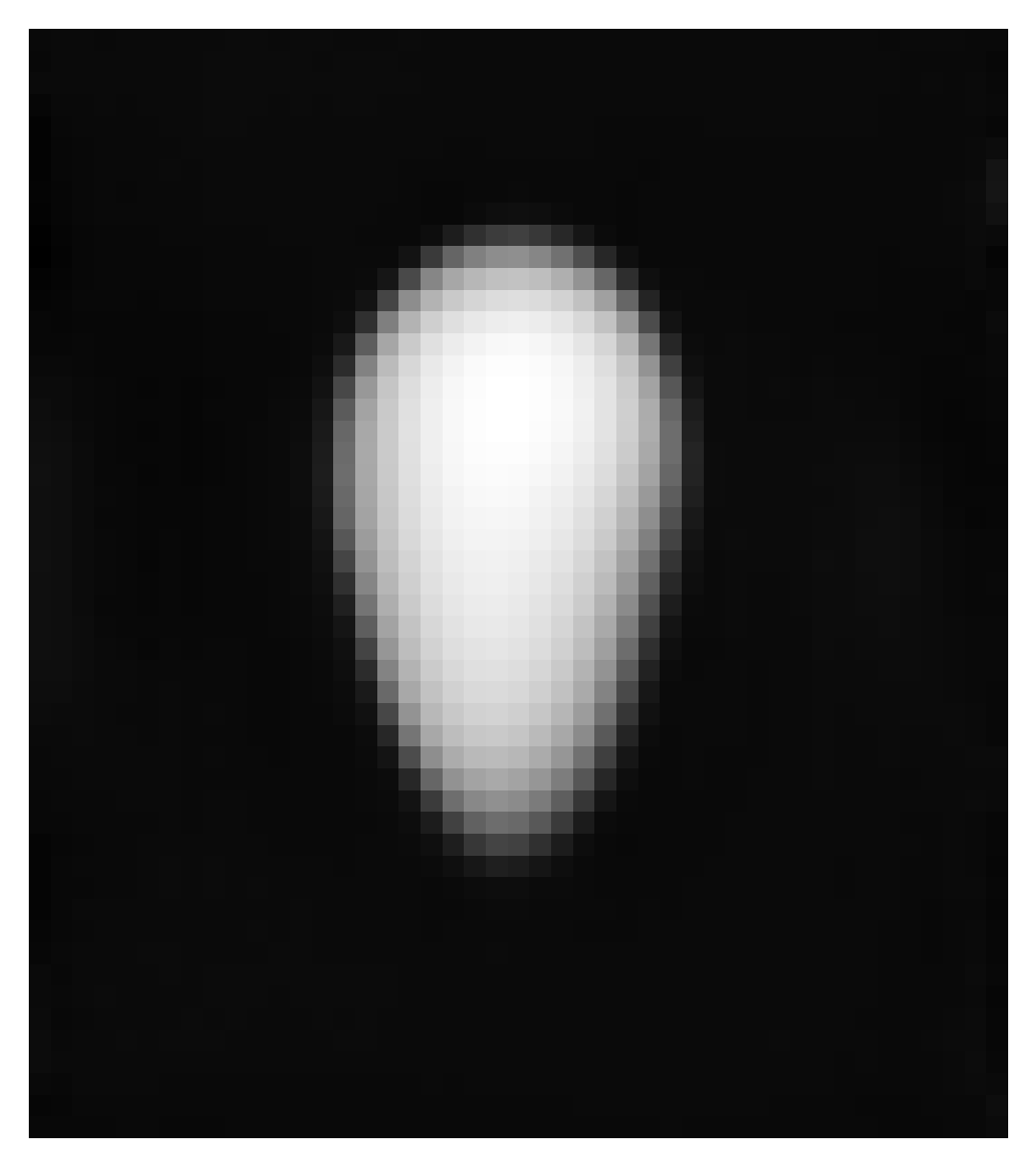}
		\caption{IceCream, $\tau = 10^{-6}$}
		\label{subfig:reco:icecream}
	\end{subfigure}
	\begin{subfigure}[t]{0.25\textwidth}
		\includegraphics[width=\linewidth]{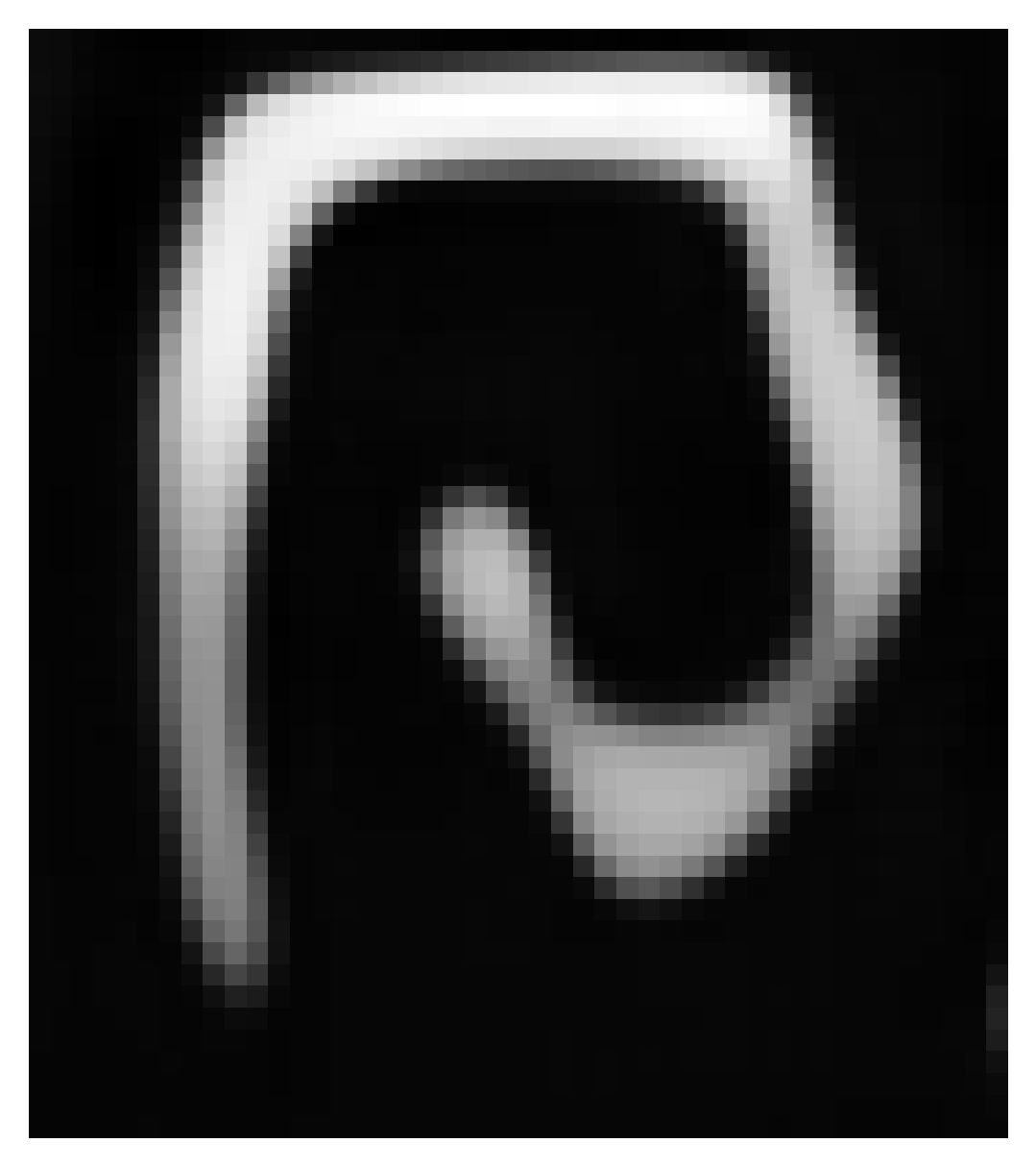}
		\caption{Snail, $\tau = 2\cdot 10^{-6}$}
		\label{subfig:reco:snail}
	\end{subfigure}
	\caption{Reconstruction obtained with the relaxation adaption step of the three phantoms in~\ref{fig:real:phantoms} from the real data in the dataset~\cite{knopp2024equilibriumdata}. Comparing the reconstructions with the pictures in figure~\ref{fig:real:phantoms}, we observe that the shape and the main features of the phantoms are reconstructed.}
	\label{fig:recos}
\end{figure}

To demonstrate that the reconstruction of the shapes and the features is an effect of the 
model and relaxation adaption step introduced in this paper, we show in figure~\ref{fig:snail:detail} in more detail 
the effect of the relaxation adaption step on the components of the MPI core response 
$A_{1,1}$, $A_{1,2}$, $A_{2,1}$ and $A_{2,2}$, on the trace $A_{1,1}+A_{2,2}$ as well as 
on the final reconstruction for a variety of choices of the parameter $\tau$.
\begin{figure}[!th]
	\centering
	\includegraphics[width=\linewidth]{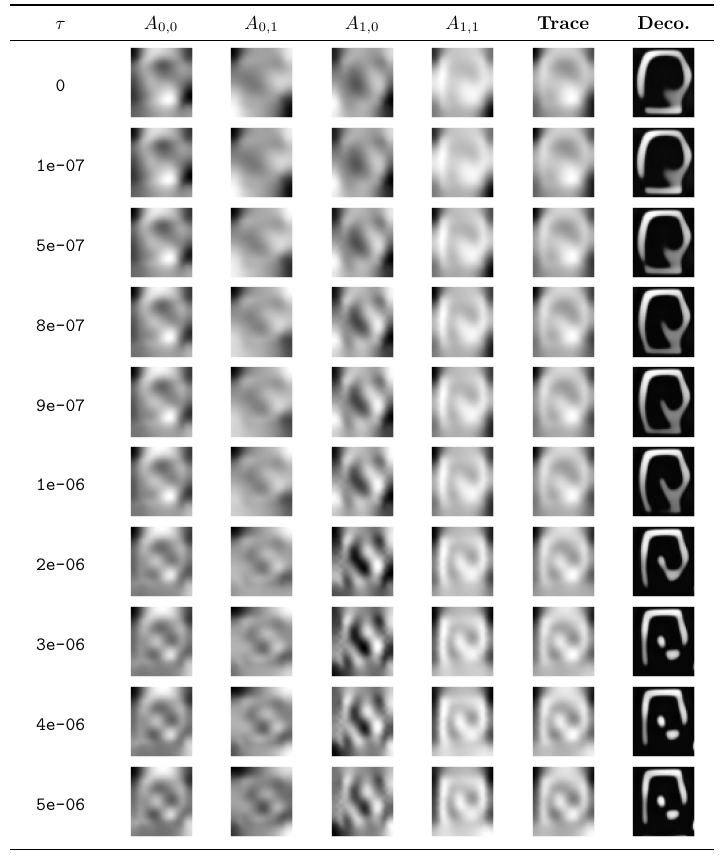}
	\caption{Detailed overview of the outputs of the MPI Core Stage and the Deconvolution Stage for a set of relaxation parameters $\tau$. We observe that, when there is no adaption ($\tau = 0$) or if $\tau$ is too small (below $5\cdot 10^{-7}$), the shape of the phantom is not clearly recognizable. An increased parameter, helps recover the shape, although with various artifacts. When $\tau = 2\cdot 10^{-6}$, the spiral shape of the phantom is clearly visible.  }
	\label{fig:snail:detail}
\end{figure}
From figure~\ref{fig:snail:detail} we observe that when the Langevin model is considered, i.e., no relaxation ($\tau=0$), 
the snail-like shape of the phantom in figure~\ref{subfig:snail} is not distinguishable. The snail phantom remains 
indistinguishable even if relaxation correction is applied but with a parameter $\tau$ that is too small 
(e.g. smaller than $5\cdot 10^{-7}$). However, when the relaxation parameter $\tau$ is big enough, the snail-like shape 
of the phantom becomes more visible, although with artifacts. When the relaxation parameter $\tau$ reaches $10^{-6}$, 
the shape of the phantom is clearly visible. An interesting observation is that if $\tau$ increases slightly beyond $10^{-6}$, 
the reconstructed phantom is not a unique, connected piece, but shows breaking points between the junctures of the tubes 
constituting the phantom. This is however interesting, as the phantom in figure~\ref{subfig:snail} is not a unique, bent and 
connected tube containing nanoparticles, but the junction of 5 independent tubes, disposed in a spiral pattern. 
From a closer inspection of the phantom in figure~\ref{subfig:snail}, there are no particles in the junctions between 
the tubular elements of the phantom.

\section{Conclusion}
In this work, we have considered a multi-dimensional Debye model to account 
for relaxation effects of the magnetic particles. 
We have obtained reconstruction formulae for this model
and shown that the Debye model-based signal is related to Langevin model-based signal
via a linear time-invariant system with exponential memory.
We have shown that this relation, upon time discretization, 
implies a first-order linear recurrence to yield the corresponding Langevin 
model-based signal data from the measured signal data.
We employed the latter to devise a relaxation-adaption step in order to provide a reconstruction algorithm. 
We have seen that the relaxation-adapted signal data is represented by the standard MPI core operator 
which allowed us to adapt our previous reconstruction scheme MoBiT-2S. 
Concerning computational costs, we have observed that, 
compared with the corresponding Langevin model based scheme,
the relaxation-adaption step in proposed reconstruction scheme adds 
only low computational cost linear in the number of time-samples $L$ collected during a scan. 
We have provided numerical results for the proposed algorithmic approach for both simulated and real data.
In particular, we were able to provide model-based reconstructions of fully 
multi-dimensional FFP data in a fully model-based setup. 
Neither a specific model transfer function as in the Langevin model approaches of 
\cite{droigk2022multidimcheb,gapyak2025trajectoryindep} nor hybrid corrections were employed. 
Instead, we were able to incorporate modeled relaxation effects of the particles via the Debye model 
to obtain reasonable reconstructions from real data. 

Topics of future research include the extension of the proposed approach to further MPI setups 
including FFL setups as well as the extension of the mathematical analysis of the proposed method. 
From an algorithmic side, imposing further regularization beyond discretization is a topic to be 
addressed when dealing with more general setups. 

\section*{Acknowledgment}

This work was supported by the Hessian Ministry of Higher Education, Research, Science and the Arts within the Framework of the ``Programm zum Aufbau eines akademischen Mittelbaus an hessischen Hochschulen" and by the German Science Fonds DFG under grant INST 168/4-1.

\bibliographystyle{ieeetr}
\bibliography{literature}

\end{document}